\DeclareMathOperator*{\infd}{inf\vphantom{\operator@font p}}
\DeclareMathOperator*{\supx}{su\smash{\operator@font p}}
\newcommand{\Cech}{\v{C}ech}
\newcommand{\C}[1]{\mbox{\v{C}ech}{(#1)}}
\newcommand{\R}{\mathbb{R}}
\newcommand{\ds}{\displaystyle}
\newcommand{\ol}{\overline}
\newcommand{\Dg}[1]{\mathrm{Dg}(#1)}
\newcommand{\DgPD}[1]{\Dg {#1}}
\newcommand{\feas}[1]{F_{\ol{#1}}}
\newcommand{\denselist}{\itemsep 0pt\parsep=1pt\partopsep 0pt}
\newcommand{\myheight}		{{height}}
\newtheorem{thm}{Theorem}
\newtheorem{lem}[thm]{Lemma}
\newtheorem{prop}[thm]{Proposition}
\newtheorem{cor}[thm]{Corollary}
\newtheorem{conj}[thm]{Conjecture}
\newtheorem{obs}{Observation}
\newtheorem{defn}[thm]{Definition}
\newcommand {\mm}[1] {\ifmmode{#1}\else{\mbox{\(#1\)}}\fi}
\title{The Relationship Between the Intrinsic \Cech{} and Persistence Distortion Distances for Metric Graphs
\thanks{The authors are grateful for the Women in Computational Topology (WinCompTop) workshop at the Institute for Mathematics and its Applications (IMA) for initiating our research collaboration, and participant travel support to this workshop was made possible through the National Science Foundation grant NSF-DMS-1619908. The IMA also hosted this collaborative group at a follow-up workshop. The American Institute of Mathematics (AIM) supports our group through the Structured Quartet Research Ensembles (SQuaRE) program. EP is supported by Pacific Northwest National Laboratory, operated for the U.S. Department of Energy by Battelle under Contract DE- AC05-76RL01830. RS is partially supported by the Simons Collaboration Grant 318086. YW is partially supported by National Science Foundation under grants RI-1815697, CCF-1740761, and CCF-1618247. }}
\author[1]{Ellen Gasparovic\thanks{gasparoe@union.edu}}
\author[2]{Maria Gommel\thanks{maria-gommel@uiowa.edu}}
\author[3]{Emilie Purvine\thanks{emilie.purvine@pnnl.gov}}
\author[4]{Radmila Sazdanovic\thanks{rsazdanovic@math.ncsu.edu}}
\author[5]{Bei Wang\thanks{beiwang@sci.utah.edu}}
\author[6]{Yusu Wang\thanks{yusu@cse.ohio-state.edu}}
\author[7]{Lori Ziegelmeier\thanks{lziegel1@macalester.edu}}
\affil[1]{Department of Mathematics, Union College, Schenectady, NY}
\affil[2]{Department of Mathematics, University of Iowa, Iowa City, IA}
\affil[3]{Computing \& Analytics Division, Pacific Northwest National Laboratory, Seattle, WA}
\affil[4]{Department of Mathematics, North Carolina State University, Raleigh, NC}
\affil[5]{School of Computing and Scientific Computing and Imaging Institute, University of Utah, Salt Lake City, UT}
\affil[6]{Department of Computer Science, Ohio State University, Columbus, OH}
\affil[7]{Department of Mathematics, Statistics, and Computer Science, Macalester College, Saint Paul, MN}
\date{}
\begin{document}
\maketitle
\begin{abstract}
Metric graphs are meaningful objects for modeling complex structures that arise in many real-world applications, such as road networks, river systems, earthquake faults, blood vessels, and filamentary structures in galaxies.
To study metric graphs in the context of comparison, we are interested in determining the relative discriminative capabilities of two topology-based distances between a pair of arbitrary finite metric graphs: the persistence distortion distance and the intrinsic \Cech{} distance.
We explicitly show how to compute the intrinsic \Cech{} distance between two metric graphs based solely on knowledge of the shortest systems of loops for the graphs.
Our main theorem establishes an inequality between the intrinsic \Cech{} and persistence distortion distances in the case when one of the graphs is a bouquet graph and the other is arbitrary.
The relationship also holds when both graphs are constructed via wedge sums of cycles and edges.
\end{abstract}


\section{Introduction}

When working with graph-like data equipped with a notion of distance, a very useful means of capturing existing geometric and topological relationships within the data is via a \emph{metric graph}.
Given an ordinary graph $G=(V,E)$ and a length function on the edges, one may view $G$ as a metric space with the shortest path metric in any geometric realization.

Metric graphs are used to model a variety of real-world data sets, such as road networks, river systems, earthquake faults, blood vessels, and filamentary structures in galaxies~\cite{AanjaneyaChazalChen2012,Sousbie2011,TupinMaitreMangin1998}.
Given these practical applications, it is natural to ask how to compare two metric graphs in a meaningful way.
Such a comparison is important to understand the stability of these  structures in the noisy setting.
One way to do this is to check whether there is a bijection between the two input graphs as part of a graph isomorphism problem~\cite{Babai2016}.
Another way is to define, compute, and compare various distances on the space of graphs.
In this paper, we are interested in determining the discriminative capabilities of two distances that arise from computational topology: the persistence distortion distance and the intrinsic \Cech{} distance.
If two distances $d_1$ and $d_2$ on the space of metric graphs satisfy an inequality $d_1(G_1, G_2) \leq c \cdot d_2(G_1, G_2)$ (for some constant $c>0$ and any pair of graphs $G_1$ and $G_2$), this means that $d_2$ has greater discriminative capacity for differentiating between two input graphs.
For instance, if $d_1(G_1, G_2) = 0$ and $d_2(G_1, G_2) > 0$, then $d_2$ has a better discriminative power than $d_1$.

\subsection{Related work}
Well-known methods for comparing graphs using distance measures include combinatorial (e.g.,~graph edit distance~\cite{ZengTungWang2009}) and spectral (e.g.,~eigenvalue decomposition~\cite{Umeyama1998}) approaches.
Graph edit distance minimizes the cost of transforming one graph to another via a set of elementary operators such as node/edge insertions/deletions,
     while spectral approaches optimize objective functions based on properties of the graph spectra.

Recently, several distances for comparing metric graphs have been proposed based on ideas from computational topology.
In the case of a special type of metric graph called a Reeb graph, these distances include: the functional distortion distance~\cite{BauerGeWang2014}, the combinatorial edit distance~\cite{FabioLandi2016}, the interleaving distance~\cite{SilvaMunchPatel2016}, and its variant in the setting of merge trees~\cite{MorozovBeketayevWeber2013}.
In particular, the functional distortion distance can be considered as a variation of the Gromov-Hausdorff distance between two metric spaces~\cite{BauerGeWang2014}.
The interleaving distance is defined via algebraic topology and utilizes the equivalence between Reeb graphs and cosheaves~\cite{SilvaMunchPatel2016}.
For metric graphs in general, both the persistence distortion distance~\cite{DeyShiWang2015} and the intrinsic \Cech{} distance~\cite{ChazalSilvaOudot2014} take into consideration the structure of metric graphs, independent of their geometric embeddings, by treating them as continuous metric spaces.
In \cite{OudotSolomon2018}, Oudot and Solomon
point out that since compact geodesic spaces can be approximated by finite metric graphs in the Gromov--Hausdorff sense~\cite{BuragoBuragoIvanov2001} (see also the recent work of M\'emoli and Okutan \cite{MemoliOkutan2018}), one can
study potentially complicated length spaces by studying the persistence distortion of a sequence of approximating graphs.

In the context of comparing the relative discriminative capabilities of these distances, Bauer, Ge, and Wang~\cite{BauerGeWang2014} show that the functional distortion distance between two Reeb graphs is bounded from below by the bottleneck distance between the persistence diagrams of the Reeb graphs.
Bauer, Munch, and Wang~\cite{BauerMunchWang2015} establish a strong equivalence between the functional distortion distance and the interleaving distance on the space of all Reeb graphs, which implies the two distances are within a constant factor of one another. Carri\`ere and Oudot \cite{CarriereOudot2017} consider the intrinsic versions of the aforementioned distances and prove that they are all globally equivalent. They also establish a lower bound for the bottleneck distance in terms of a constant multiple of the functional distortion distance. In \cite{DeyShiWang2015}, Dey, Shi, and Wang show that the persistence distortion distance is stable with respect to changes to input metric graphs as measured by the Gromov-Hausdorff distance. In other words, the persistence distortion distance is bounded above by a constant factor of the Gromov-Hausdorff distance.
Furthermore, the intrinsic \Cech{} distance is also bounded from above by the Gromov-Hausdorff distance for general metric spaces~\cite{ChazalSilvaOudot2014}.

\subsection{Our contribution}

The main focus of this paper is relating two specific topological distances between general metric graphs $G_1$ and $G_2$: the intrinsic \Cech{} distance and the persistence distortion distance.
Both of these can be viewed as distances between topological signatures or summaries of $G_1$ and $G_2$.
Indeed, in the case of the intrinsic \Cech{} distance, a metric graph $(G, d_G)$ is mapped to the persistence diagram $\mathrm{Dg}_1 IC_G$ induced by the so-called intrinsic \Cech{} filtration $IC_G$, and we may think of $\mathrm{Dg}_1 IC_G$ as the signature of $G$.
The intrinsic \Cech{} distance $d_{IC} (G_1, G_2)$ between two metric graphs $G_1$ and $G_2$ is 
the bottleneck distance between these signatures, denoted $d_B(\mathrm{Dg}_1 IC_{G_1}, \mathrm{Dg}_1 IC_{G_2})$.
For the persistence distortion distance, each metric graph $G$ is mapped to a set $\Phi(G)$ of persistence diagrams, which is the signature of the graph $G$ in this case.
The persistence distortion distance $d_{PD}(G_1, G_2)$ between $G_1$ and $G_2$ is measured by the Hausdorff distance between these image sets or signatures. See Section~\ref{sec:bground} for the definition of $\Phi$, along with more detailed definitions of these two distances.

Our objective is to determine the relative discriminative capacities of such signatures.
We conjecture that the persistence distortion distance is 
more discriminative than the intrinsic \Cech{} distance.

\begin{conj}  $d_{IC} \le c \cdot d_{PD}$ for some constant $c > 0$. \label{mainCon}
\end{conj}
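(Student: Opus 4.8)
The plan is to reduce both sides of the inequality to comparisons of loop-length data and then to bound the loop comparison underlying $d_{IC}$ by the (a priori richer) information carried by the persistence distortion signature $\Phi$. First I would invoke the promised characterization of the intrinsic \Cech{} distance: since $\mathrm{Dg}_1 IC_{G}$ is determined by a shortest system of loops of $G$, with a loop of length $\ell$ contributing a single off-diagonal point whose birth is $0$ and whose persistence is a fixed universal fraction of $\ell$, the distance $d_{IC}(G_1,G_2) = d_B(\mathrm{Dg}_1 IC_{G_1}, \mathrm{Dg}_1 IC_{G_2})$ equals a constant multiple of the bottleneck distance between the two loop-length multisets (each padded toward the diagonal). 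Hence it suffices to bound this loop-length bottleneck distance from above by $c\cdot d_{PD}$.

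Next I would extract the same loop-length data from $\Phi$. For a basepoint $s$ the sublevel sets of the geodesic distance function $f_s = d_G(s,\cdot)$ are geodesic balls, which are always connected, so the loops of $G$ register in $\mathrm{Dg}(f_s)$ as degree-one classes, each born precisely when the growing ball front wraps around the corresponding loop and closes it. When $s$ lies on a loop of length $\ell$ the two fronts meet at the antipodal point and the class is born at $\ell/2$. Thus for a bouquet $B$ with loop lengths $\ell_1,\dots,\ell_k$, taking $s$ to be the wedge vertex yields a diagram whose degree-one content is $\{(\ell_i/2,\infty)\}_i$, reading off the entire loop-length multiset at once; the signature $\Phi(B)$ therefore contains a single diagram exposing all loops simultaneously, and the same holds loop-by-loop for wedges of cycles and edges.

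With both reductions in hand I would lower-bound $d_{PD}(G_1,G_2)$ by a witness argument. Writing $d_{PD}$ as the maximum of the two directed Hausdorff distances between $\Phi(G_1)$ and $\Phi(G_2)$, suppose the loop-length multisets differ by $\delta$, so that $d_{IC}\approx c_1\delta$. Picking the basepoint $s$ in whichever graph carries the unmatched loop, the diagram $\mathrm{Dg}(f_s)$ has a high-persistence degree-one point at roughly half that loop's length; if no diagram arising from a basepoint of the other graph carries a comparably placed point, then $\inf_{s'}d_B(\mathrm{Dg}(f_s),\mathrm{Dg}(f_{s'}))\ge c_2\delta$, which forces $d_{PD}\ge c_2\delta \ge c\cdot d_{IC}$.

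The hard part is the clause ``if no diagram of the other graph carries a comparably placed point.'' The persistence distortion signature is an entire family of diagrams indexed by basepoints, and for an arbitrary metric graph the geodesic distance functions have many critical values created by branch points and by the other loops, so a priori some basepoint could produce a diagram that spuriously matches the witness, collapsing the directed Hausdorff distance even when the loop-length multisets genuinely disagree. Ruling this out requires uniform control of $\{\mathrm{Dg}(f_{s'}) : s'\in G_2\}$, and this is exactly what the bouquet and wedge-of-cycles hypotheses buy: the single-vertex, highly symmetric structure pins the signature down completely and forces each loop to appear as a cleanly identifiable degree-one class, so the matching analysis closes. Obtaining the analogous control when both graphs are fully arbitrary is the principal obstacle, and is the reason the inequality is stated here only as a conjecture.
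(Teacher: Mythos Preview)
Your high-level picture---reduce $d_{IC}$ to a bottleneck comparison of loop-length multisets, then argue that the same loop data is visible in the signatures $\Phi(G)$, and finally use a witness basepoint to lower-bound $d_{PD}$---is indeed the skeleton of what the paper does in the special cases, and you correctly isolate the obstruction to the general case. Two points, however, separate your sketch from an actual argument even in the bouquet setting.

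First, a technical correction. You write the degree-one content of $\mathrm{Dg}(f_o)$ for a bouquet as $\{(\ell_i/2,\infty)\}_i$. The paper works with the \emph{extended} persistence diagram (equivalently, the $0$-dimensional levelset zigzag), so the essential $1$-classes are finitely paired: for a loop of length $2t_i$ based at the wedge vertex, the point is $(0,t_i)$, not $(\ell_i/2,\infty)$. This matters, because the matching problem in the bottleneck distance is then between finite points on or near the $y$-axis, and the geometry of those points (the ``feasible regions'' $F_{\bar s}$ of the paper) is what drives the argument.

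Second, and more substantively, your witness step is where the real work hides, and your sketch does not supply it. You say: pick the basepoint $s$ exhibiting the unmatched loop; if no diagram from the other graph has a comparably placed point, you win. But the definition of $d_{PD}$ requires that the \emph{infimum} over basepoints $v\in G_2$ of $d_B(\mathrm{Dg}(f_s),\mathrm{Dg}(g_v))$ be large, so you must rule out a good match for \emph{every} $v$ in the arbitrary graph $G_2$, not just observe that the loop lengths differ. The paper handles this by fixing $s=o$ (the bouquet vertex of $G_1$), taking an arbitrary $v\in G_2$, and proving that the points of $\mathrm{Dg}(g_v)$ can be assigned, one per loop of $G_2$, into the feasible regions $F_{\bar s_i}$; a lemma then shows any such diagram is at least as far from $D_1=\{(0,t_i)\}$ as the ideal diagram $D^\star=\{(0,s_i)\}$. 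The assignment itself is nontrivial: it is obtained via Hall's marriage theorem, with the marriage condition verified by an inductive loop-surgery argument controlling the heights of auxiliary cycles. None of this machinery appears in your sketch, and without it the clause ``if no diagram \ldots carries a comparably placed point'' is an assumption rather than a conclusion, even when $G_1$ is a bouquet.
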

It is known from \cite{GasparovicGommelPurvine2018} that $\mathrm{Dg}_1 IC_G$ depends only on the lengths of the shortest system of loops in $G$, and thus the persistence distortion distance appears to be more discriminative, intuitively.
We show in Section~\ref{sec:icdist} that the intrinsic \Cech{} distance between two arbitrary finite metric graphs is determined solely by the difference in these shortest cycle lengths; see Theorem \ref{thm:ICDist} for a precise statement.
This further implies that the intrinsic \Cech{} distance between two arbitrary metric trees is always $0$.
In contrast, the persistence distortion distance takes relative positions of loops as well as branches into account, and is nonzero in the case of two trees.
In other words, the conjecture holds for metric trees.

We make progress toward proving the conjecture in greater generality in this paper.
Theorem \ref{thm:main}
establishes an inequality 
between the intrinsic \Cech{} and persistence distortion distances for two finite metric graphs in the case when one of the graphs is a bouquet graph and the other is arbitrary. In this case, the constant $c=1/2$ so that the inequality is sharper than what is conjectured.
The theorem and proof appear in Section~\ref{sec:main}, and we conclude that section by proving that Conjecture \ref{mainCon} also holds
when both graphs are constructed by taking wedge sums of cycles and edges.
While this does not yet prove the conjecture for arbitrary metric graphs, our work provides the first non-trivial relationship between these two meaningful topological distances. Our proofs also provide insights on the map $\Phi$ from a metric graph into the space of persistence diagrams as utilized in the definition of the persistence distortion distance. This map $\Phi$ is of interest itself; indeed, see the recent study of this map in \cite{OudotSolomon2018}.

In general, we believe that this direction of establishing qualitative understanding of topological signatures and their corresponding distances is interesting and valuable for use in applications.
We leave the proof of the conjecture for arbitrary metric graphs as an open problem and give a brief discussion on some future directions
in Section~\ref{sec:future}.

%

\section{Background}
\label{sec:bground}

\subsection{Persistent homology and metric graphs}
\label{subsec:ph+graphs}

We begin with a brief summary of \emph{persistent homology} and how it can be utilized in the context of \emph{metric graphs}. For background on homology and simplicial complexes, we refer the reader to \cite{Hatcher2002,Munkres1984}, and for further details on persistent homology, see, e.g., \cite{Carlsson2009,EdelsbrunnerHarer2008}.

In persistent homology, one studies the changing homology of an increasing sequence of subspaces of a topological space $X$.
One (typical) way to obtain a filtration of $X$ is to take a continuous function $f:X \rightarrow \mathbb{R}$ and construct the \emph{sublevel set filtration}, $\emptyset = X_{a_0} \subseteq X_{a_1} \subseteq \ldots \subseteq X_{a_m}=X$, by writing $X_{a_i} = f^{-1}(-\infty,a_i)$ for the sublevel set defined by the value $a_i$.
The inclusions $\{X_{a_i} \to X_{a_{j}}\}_{0 \leq i < j \leq m}$ induce the \emph{persistence module} $H_k(X_{a_0}) \rightarrow  H_k(X_{a_1})  \rightarrow \ldots \rightarrow H_k(X_{a_m})$ in any homological dimension $k$ by applying the homology functor with coefficients in some field.
Another way to obtain a filtration is to build a sequence of simplicial complexes on a set of points using, for instance, the \emph{intrinsic \Cech{} filtration} \cite{ChazalSilvaOudot2014} discussed in Section~\ref{sec:distancesbground}.


Elements of each homology group may then be tracked through the filtration and recorded in a \emph{persistence diagram}, with one diagram for each $k$. A persistence diagram is a multiset of points $(a_i, a_j)$ in the extended plane $(\R\cup \pm \infty)^2$, where each point $(a_i, a_j)$ corresponds to a homological element that appears for the first time (is ``born'') at $H_k(X_{a_i})$ and which disappears (``dies'') at $H_k(X_{a_j})$.
A persistence diagram also includes the infinitely many points along the diagonal line $y=x$.
The usual mantra for persistence is that points close to the diagonal are likely to represent noise, while points further from the diagonal may encode more robust topological features.

In this paper, we are interested in summarizing the topological structure of a finite \emph{metric graph}, specifically in homological dimension $k=1$.
Given a graph $G=(V,E)$, where $V$ and $E$ denote the vertex and edge sets, respectively, as well as a length function, $length: E \rightarrow \mathbb{R}_{\geq 0}$, on edges in $E$, a finite metric graph $(|G|,d_G)$ is a metric space where $|G|$ is a geometric realization of $G$ and $d_G$ is defined as in \cite{DeyShiWang2015}. Namely, if $e$ and $|e|$ denote an edge and its image in the geometric realization, we define $\alpha : [0, length(e)] \rightarrow |e|$ to be the arclength parametrization, so that $d_G(u,v)=|\alpha^{-1}(v)-\alpha^{-1}(u)|$ for any $u,v \in |e|$. This definition may then be extended to any two points in $|G|$ by restricting a given path from one point to another to edges in $G$, adding up these lengths, then taking the distance to be the minimum length of any such path.
In this way, all points along an edge are points in a metric graph, not just the original graph's vertices.

A \emph{system of loops of $G$} refers to a set of cycles whose associated homology classes form a minimal generating set for the $1$-dimensional (singular) homology group of $G$.
The \emph{length-sequence} of a system of loops is the sequence of lengths of elements in this set listed in non-decreasing order. Thus, a system of loops of $G$ is \emph{shortest} if its length-sequence is lexicographically smallest among all possible systems of loops of $G$.

One particular class of metric graphs we will be working with are \emph{bouquet graphs}.
These are metric
graphs containing a single vertex with a number of self-loops of 
various lengths attached to it.%

\subsection{Intrinsic \Cech{} and persistence distortion distances}
\label{sec:distancesbground}

In this section, we recall the distances between metric graphs that are being explored in this work. We note that both are actually \emph{pseudo-distances} because it can be the case that $d(G_1, G_2)=0$ when $G_1 \neq G_2$. However, for ease of exposition, we will refer to them simply as distances in this paper.
Both rely on the \emph{bottleneck distance} on the space of persistence diagrams, a version of which we now state.

\begin{defn} Let $X$ and $Y$ be persistence diagrams with $\mu: X \rightarrow Y$ a bijection. The \textbf{bottleneck distance} between $X$ and $Y$ is
\[d_B(X,Y) := \displaystyle{\inf_{\mu:X \rightarrow Y} \supx_{x\in X} ||x - \mu(x)||_1}.\]
\end{defn}
\noindent Although this definition differs from the standard version of the bottleneck distance, which uses $||x - \mu(x)||_{\infty}$ rather than $||x - \mu(x)||_1$, the two are related via the inequalities $||x||_{\infty} \leq ||x||_1 \leq 2 ||x||_{\infty}$. 

Next, let $(G, d_G)$ be a metric graph with geometric realization $|G|$. Define the intrinsic ball $B(x, a_i) = \{ y \in |G| : d_G(x,y) \leq a_i\}$ for any $x \in |G|$, as well as the uncountable open cover $U_{a_i} = \{ B(x, a_i) : x \in |G| \}$. We use $\C{a_i}$ to denote the nerve of the cover $U_{a_i}$, referred to as the \emph{intrinsic \Cech{} complex}.
See Figure~\ref{fig:ex_cech} for an illustration.
Then $\{ \C{a_i} \hookrightarrow \C{a_j} \}_{0 \leq a_i < a_j}$ is the \emph{intrinsic \Cech{} filtration} inducing the \emph{intrinsic \Cech{} persistence module} $\{H_k(\C{a_i}) \rightarrow H_k(\C{a_j}) \}_{0 \leq a_i < a_j}$ in any dimension $k,$ and the corresponding persistence diagram is denoted $Dg_kIC_G$.
The following intrinsic \Cech{} distance definition comes from \cite{ChazalSilvaOudot2014}. Here, we work with dimension $k = 1$.
\begin{figure}[ht]
\begin{center}
\includegraphics[scale=0.5]{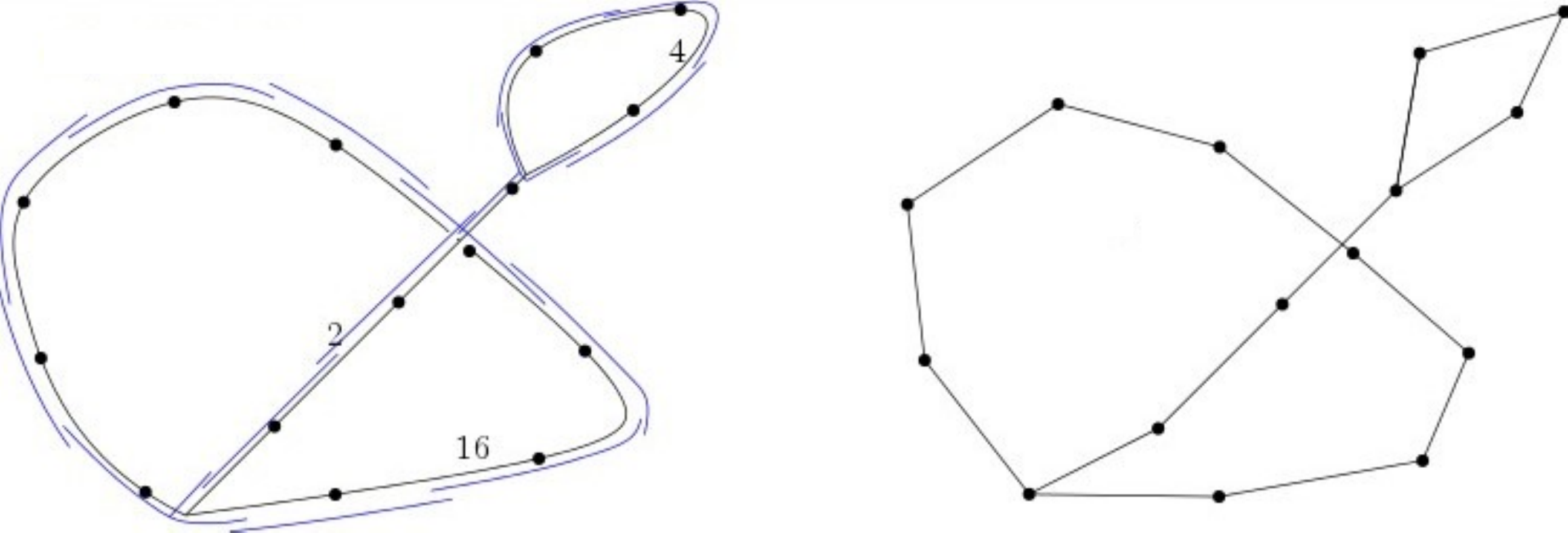}
\caption{A finite subset of the infinite cover at a fixed radius (left) and its  corresponding nerve (right).}\label{fig:ex_cech}
\end{center}
\end{figure}
\begin{defn}
Given two metric graphs $(G_1, d_{G_1})$ and $(G_2, d_{G_2})$, their \textbf{intrinsic \v{C}ech distance} is
$ d_{IC}(G_1, G_2) := d_B(\mathrm{Dg}_1 IC_{G_1}, \mathrm{Dg}_1 IC_{G_2}).$
\end{defn}

The persistence distortion distance was first introduced in \cite{DeyShiWang2015}. Given a base point $v \in |G|$, define the geodesic distance function $f_v : |G| \rightarrow \R$ where $f_v(x) = d_G(v,x)$. Then $\Dg{f_v}$ is the union of the $0-$ and $1-$dimensional \emph{extended persistence diagrams} for $f_v$ (see \cite{cohen2009extending} for the details of extended persistence). Equivalently, it is the $0$-dimensional \emph{levelset zigzag persistence diagram} induced by $f_v$ \cite{CarlssonSilvaMorozov2009}. Define $\Phi : |G| \rightarrow SpDg$, $\Phi(v) = \DgPD{f_v},$ where $SpDg$ denotes the space of persistence diagrams for all points $v \in |G|$.
The set $\Phi(|G|) \subset SpDg$ is the \emph{persistence distortion} of the metric graph $G$.

\begin{defn}
Given two metric graphs $(G_1, d_{G_1})$ and $(G_2, d_{G_2})$, their \textbf{persistence distortion distance} is
\[ d_{PD}(G_1, G_2) := d_H(\Phi(|G_1|), \Phi(|G_2|)) \]
where $d_H$ denotes the Hausdorff distance. In other words,
\[
d_{PD}(G_1, G_2) = \max \left\{ \sup_{D_1 \in \Phi(|G_1|)} \infd_{D_2 \in \Phi(|G_2|)} d_B(D_1, D_2),  \sup_{D_2 \in \Phi(|G_2|)} \infd_{D_1 \in \Phi(|G_1|)} d_B(D_1, D_2)\right\}.
\]
\end{defn}
\noindent Note that the diagram $\DgPD{f_v}$ contains both $0-$  and $1-$dimensional persistence points, but only points of the same dimension are matched under the bottleneck distance. 
In this paper, we will only focus on the points in the
$1$-dimensional extended persistence diagrams for the persistence distortion  distance computation.

\section{Calculating the intrinsic \Cech{} distance}
\label{sec:icdist}

In this section, we show that the intrinsic \Cech{} distance between two metric graphs may be easily computed from knowing the shortest systems of loops for the graphs.
We begin with a theorem that characterizes the bottleneck distance between two sets of points in the extended plane.

\begin{thm}
\label{thm:ICDist}
Let $D_1= \{ (0, a_1), \ldots, (0, a_n) \}$ and $D_2= \{ (0, b_1), \ldots, (0, b_n) \}$ be two persistence diagrams with $0 \leq a_1 \leq \cdots \leq a_n$ and $0 \leq b_1 \leq \cdots \leq b_n$, respectively.
Then $d_B(D_1,D_2)= \ds\max_{i=1}^n |a_i - b_i|$.
\end{thm}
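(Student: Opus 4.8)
The plan is to prove the two inequalities $d_B(D_1,D_2)\le \max_i|a_i-b_i|$ and $d_B(D_1,D_2)\ge \max_i|a_i-b_i|$ separately. Before either, I would record one preliminary fact tailored to the $\ell_1$ norm used here: the distance from an off-diagonal point $(0,a)$ to the diagonal $y=x$ is exactly $a$. Indeed, minimizing $\|(0,a)-(t,t)\|_1 = |t| + |a-t|$ over diagonal points $(t,t)$ gives the value $a$, attained for every $t \in [0,a]$. This is the cost of leaving a point unmatched (matching it to the diagonal), and it is precisely what drives the lower bound. Note this is the place where $\ell_1$ matters: the analogous $\ell_\infty$ distance would be $a/2$.

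For the upper bound I would simply exhibit one bijection: send the $i$-th off-diagonal point $(0,a_i)$ of $D_1$ to the $i$-th off-diagonal point $(0,b_i)$ of $D_2$ (using the given sorted orders), and match diagonal points to diagonal points. Its cost is $\sup_i \|(0,a_i)-(0,b_i)\|_1 = \max_i |a_i-b_i|$, so the infimum defining $d_B$ is at most $\max_i|a_i - b_i|$.

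For the lower bound, which I expect to be the crux, I would argue by contradiction using a counting/pigeonhole argument. Let $\delta = \max_i|a_i - b_i|$, realized at an index $k$, and assume without loss of generality that $a_k \ge b_k$, so $a_k = b_k + \delta$. Suppose some bijection $\mu$ had cost strictly less than $\delta$. Consider the $n-k+1$ points $(0,a_k),\dots,(0,a_n)$, each of height $a_i \ge a_k = b_k+\delta \ge \delta$. None of them can be matched to the diagonal, since by the preliminary fact that would cost $a_i \ge \delta$. Hence each is matched to some off-diagonal point $(0,b_j)$ with $|a_i - b_j| < \delta$, forcing $b_j > a_i - \delta \ge b_k$ and therefore $j > k$ by sortedness. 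But there are at most $n-k$ indices $j$ with $b_j > b_k$, so we cannot injectively route $n-k+1$ points into them, a contradiction. Thus every bijection costs at least $\delta$, giving $d_B \ge \delta$; the case $b_k > a_k$ follows by the symmetry of $d_B$ in its two arguments.

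The main obstacle is making the lower bound's counting argument airtight, in particular handling ties among the $b_j$ (keeping track of strict versus nonstrict inequalities so that the target set genuinely has at most $n-k$ elements) and confirming that matching to the diagonal is strictly more expensive than the threshold. Both of these reduce to the $\ell_1$-specific computation that a point's distance to the diagonal is its full height $a$ rather than $a/2$; I would also sanity-check degenerate cases such as $b_k = 0$ or $n = 0$.
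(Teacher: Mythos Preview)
Your proof is correct, and the lower bound argument is genuinely different from the paper's. The paper proves optimality of the sorted matching $\mu^*$ via an \emph{exchange argument}: given any matching $\mu$, it locates the first index $j$ where $\mu$ disagrees with $\mu^*$ and shows (by a trapezoid case analysis on the relative order of $a_j,a_k,b_j,b_l$) that rematching $\overline{a_j}\leftrightarrow\overline{b_j}$ does not raise the cost; diagonal matches are first rerouted back to off-diagonal points. Iterating transforms $\mu$ into $\mu^*$ without increasing cost. Your approach instead fixes the maximizing index $k$ and uses a \emph{pigeonhole} count: under a hypothetical matching of cost $<\delta$, the $n-k+1$ points $(0,a_k),\dots,(0,a_n)$ can neither go to the diagonal (your $\ell_1$ computation gives cost $a_i\ge a_k\ge\delta$) nor to any $(0,b_j)$ with $j\le k$ (since $b_j>a_i-\delta\ge b_k$ strictly), leaving only $n-k$ targets. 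This is shorter and sidesteps the case analysis entirely; the paper's exchange argument, on the other hand, is more constructive in that it actually shows how to improve any given matching. Your worry about ties is already handled: the strict hypothesis ``cost $<\delta$'' yields the strict inequality $b_j>b_k$, and since the $b$'s are non-decreasing this forces $j>k$ regardless of repetitions.
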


\begin{proof}
To simplify notation, we use the convention that for all $i=1,\ldots,n$, $(0,a_i) = \ol{a_i}$, $(0, b_i) = \ol{b_i}$, and $(0,0) = \ol{0}$.
Let $\mu$ be any matching of points in $D_1$ and $D_2$, 
where each point $\ol{a_i}$ in $D_1$ is either matched to a unique point $\ol{b_j}$ in $D_2$ or to the nearest neighbor in the diagonal (and similarly for $D_2$).
Assume that $C_\mu$ is the cost of the matching $\mu$, i.e., the maximum distance between two matched points.

Now, let $\mu^*$ be the matching such that $\mu^*(\ol{a_i}) = \ol{b_i}$ for all $0 \leq i \leq n$.
By construction, the cost of this matching is $C_{\mu^*} = \displaystyle\max_{i=1}^{n}|a_i-b_i|$.
We claim that the matching cost of $\mu^*$ is less than or equal to that of $\mu$, i.e., $C_{\mu^*} \leq C_\mu$.
If this is the case, then $\mu^*$ is the optimal bottleneck matching and therefore $d_B(D_1,D_2) = C_{\mu^*}$.

To show this, we look at where the matchings $\mu$ and $\mu^*$  differ.
Note that since all of the off-diagonal points in $D_1$ and $D_2$ lie on the $y$-axis, any such point matched to the diagonal under $\mu$ may simply be matched to $(0,0)$ since this will yield the same value in the $\ell_1-$norm.
Now, starting with $b_1$, let $j$ be the first index where $\mu(\ol{a_j}) \neq \ol{b_j}$.  Then, we have two cases:  (1) $\mu(\ol{a_k}) = \ol{b_j}$ for some $k > j$ (i.e., $\ol{b_j}$ is matched with some $\ol{a_k} \neq \ol{a_j}$); or (2) $\mu(\ol{0}) = \ol{b_j}$ (i.e., $\ol{b_j}$ is matched with the diagonal, or equivalently, to $\ol{0}$).  We show that in either case, matching $\ol{b_j}$ with $\ol{a_j}$ instead does not increase the cost of the matching.

In the first case, let us also assume that $\mu(\ol{a_j}) = \ol{b_l}$ for some $l > j$  (the situation where $\mu(\ol{a_j})=\ol{0}$ will be taken care of in the second case).  Then, $\max\{|a_j-b_j|,|a_k-b_l|\} \leq \max\{|a_j-b_l|,|a_k-b_j|\}$.  That is, if we were to instead pair $\ol{a_j}$ with $\ol{b_j}$ and $\ol{a_k}$ with $\ol{b_l}$, the cost of the matching would be lower. This can be seen by working through a case analysis on the relative order of $a_j,a_k,b_j$, and $b_l$ along the $y$-axis. Intuitively, we can think of $a_j,a_k,b_j$, and $b_l$ as the four corners of a trapezoid as in Figure \ref{fig:trap}.  The diagonals of the trapezoid represent the distances under the matching $\mu$, while the legs of the trapezoid represent the distances when we pair $\ol{a_j}$ with $\ol{b_j}$ and $\ol{a_k}$ with $\ol{b_l}$.  The maximum of the lengths of the legs will always be less than the maximum of the lengths of the diagonals.  Adjusting the lengths of the top and bottom bases (which amounts to changing the order of $a_j,a_k,b_j$, and $b_l$ along the $y$-axis) does not change this fact.  Therefore, matching $\ol{b_j}$ with $\ol{a_j}$ instead of $\ol{a_k}$ does not increase the cost of the matching.

\begin{figure}[h]
\centering
\includegraphics[width=0.3\textwidth]{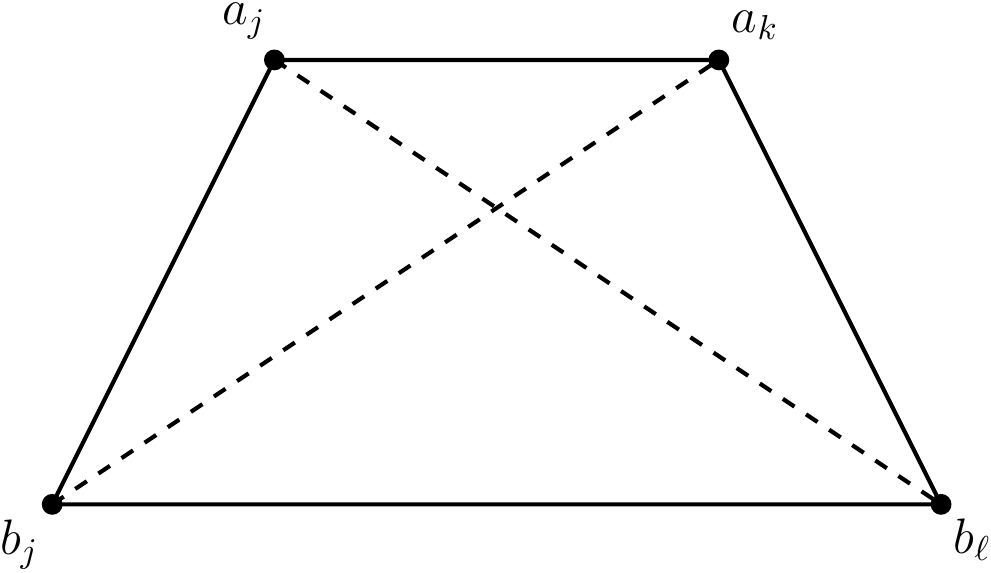}
\caption{A trapezoid formed by $a_j,a_k,b_j$, and $b_l$.}
\label{fig:trap}
\end{figure}

In the second case, if $\ol{b_j}$ is matched to $\ol{0}$, there must be some $\ol{a_k}$ with $k \geq j$ that is matched to $\ol{0}$, as well. If we were to instead match $\ol{b_j}$ to $\ol{a_k}$, this does not increase the cost of the matching since $\max\{ b_j, a_k\} \ge |a_k - b_j|$ (i.e., the original cost is greater than the new cost). After this rematching, $\ol{b_j}$ is no longer matched to $\ol{0}$ and this reverts to the first case. Similarly, if $\ol{a_j}$ is matched to $\ol{0}$, it may be rematched in a similar manner.

By looking at all the pairings where $\mu$ and $\mu^*$ differ (in increasing order of indices), pairing $\ol{a_i}$ with $\ol{b_i}$ instead of $\mu(\ol{a_i})$ (and similarly, pairing $\ol{b_i}$ with $\ol{a_i}$ rather than what it was paired with under $\mu$) always results in the same or lower cost matching.
Therefore, $C_{\mu^*} 
\leq 
C_{\mu}$ for all matchings $\mu$; hence, $d_B(D_1,D_2) = C_{\mu^*} = \displaystyle\max_{i=1}^{n}|a_i-b_i|$.
\end{proof}

To see how this applies to the computation of the intrinsic \Cech{} distance between two metric graphs, let $G_1$ be a metric graph with a shortest system of $m$ loops of lengths $0 < 2t'_1\leq\cdots\leq 2t'_m$, and let $G_2$ be a metric graph with a shortest system of $n$ loops of lengths $0 < 2s_1\leq \cdots \leq 2s_n$. Without loss of generality, suppose $n \geq m$.
From \cite{GasparovicGommelPurvine2018}, the $1$-dimensional intrinsic \Cech{} persistence diagrams of $G_1$ and $G_2$ are
the multisets of points $\mathrm{Dg}_1 IC_{G_1}=
\left\{ \ds \left(0, \frac{t'_1}{2}\right), \ldots, \left(0, \frac{t'_m}{2}\right) \right\}$ and $\mathrm{Dg}_1 IC_{G_2}=
\left\{ \ds \left(0, \frac{s_1}{2}\right), \ldots, \left(0, \frac{s_n}{2}\right) \right\}$.
In order to apply Theorem~\ref{thm:ICDist}, we add $n-m$ copies of the point $(0,0)$ at the start of the list of points in $\mathrm{Dg}_1 IC_{G_1}$, i.e., let
\[\mathrm{Dg}_1 IC_{G_1}=\left\{ \ds\left(0, \frac{{t}_1}{2}\right), \ldots, \left(0, \frac{{t}_n}{2}\right) \right\},\] where $t_1 = \cdots = t_{n-m} = 0$, $t_{n-m+1} =t'_1,\ldots,$ and $t_n=t'_m$.

\begin{cor}\label{cor:ICDist}
Let $G_1$ and $G_2$ be as above. Then
 \[d_{IC}(G_1, G_2) = \ds\max_{i=1}^n \frac{|s_i - t_i|}{2}.\]
\end{cor}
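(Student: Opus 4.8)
The plan is to obtain the corollary as a direct application of Theorem~\ref{thm:ICDist}. By definition, $d_{IC}(G_1,G_2) = d_B(\mathrm{Dg}_1 IC_{G_1}, \mathrm{Dg}_1 IC_{G_2})$, and the formula quoted from \cite{GasparovicGommelPurvine2018} tells us that both one-dimensional intrinsic \Cech{} diagrams consist solely of points on the $y$-axis, namely $\{(0,t'_i/2)\}_{i=1}^m$ and $\{(0,s_i/2)\}_{i=1}^n$. These are exactly the diagrams to which Theorem~\ref{thm:ICDist} applies, provided their cardinalities are first made equal.

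First I would justify the padding step, which is the one genuinely delicate point. The theorem requires both diagrams to list the same number $n$ of off-diagonal points, but $G_1$ contributes only $m \le n$ of them. I would insert $n-m$ copies of $(0,0)$ into $\mathrm{Dg}_1 IC_{G_1}$ and argue that this does not change the bottleneck distance, since $(0,0)$ lies on the diagonal $y=x$ and is therefore a ``free'' point of any persistence diagram. Concretely, as already observed inside the proof of Theorem~\ref{thm:ICDist}, an axis point $(0,a)$ attains its minimal $\ell_1$-distance to the diagonal, equal to $a$, by matching to $(0,0)$; hence routing an excess axis point of $\mathrm{Dg}_1 IC_{G_2}$ either to the diagonal or to one of the newly inserted $(0,0)$ points contributes the same amount $a$ to the matching cost. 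Consequently the padded and unpadded versions of $\mathrm{Dg}_1 IC_{G_1}$ have identical bottleneck distance to $\mathrm{Dg}_1 IC_{G_2}$.

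With both diagrams now of size $n$ and listed in non-decreasing order of their $y$-coordinates, I would apply Theorem~\ref{thm:ICDist} with $a_i = t_i/2$ and $b_i = s_i/2$, where $(t_i)$ is the padded sequence $t_1 = \cdots = t_{n-m}=0$, $t_{n-m+1}=t'_1, \ldots, t_n = t'_m$. This immediately gives
\[
d_{IC}(G_1,G_2) = \max_{i=1}^n \left|\frac{t_i}{2}-\frac{s_i}{2}\right| = \max_{i=1}^n \frac{|s_i-t_i|}{2},
\]
as claimed. As a consistency check I would note that the optimal matching $\mu^*$ furnished by the theorem pairs each inserted $(0,0)$ with one of the $n-m$ shortest loops of $G_2$, which is precisely the cost-minimizing way to send the surplus points of $\mathrm{Dg}_1 IC_{G_2}$ to the diagonal; everything beyond this padding observation is pure substitution into Theorem~\ref{thm:ICDist}, so the main obstacle is confirming that the $(0,0)$ insertions leave $d_{IC}$ invariant.
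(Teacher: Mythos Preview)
Your proposal is correct and matches the paper's approach exactly: the paper itself sets up the padding of $\mathrm{Dg}_1 IC_{G_1}$ with $n-m$ copies of $(0,0)$ in the paragraph immediately preceding the corollary, then states the corollary as a direct application of Theorem~\ref{thm:ICDist} without further argument. Your explicit justification of why the padding leaves $d_B$ invariant is a useful elaboration, but the overall route is identical.
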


\section{Relating the intrinsic \Cech{} and persistence distortion distances for a bouquet graph and an arbitrary graph}
\label{sec:main}

\subsection{Feasible regions in persistence diagrams}
\label{subsec:feasible}

Our eventual goal for our main theorem (Theorem~\ref{thm:main}) is to estimate a lower bound for the persistence distortion distance between metric graphs $G_1=(V_1,E_1)$ and $G_2=(V_2,E_2)$ so that we can compare it with the intrinsic \Cech{} distance between them, given in Corollary~\ref{cor:ICDist}. A fundamental part of this process relies on the notion of a \emph{feasible region} for a point in a given persistence diagram lying on the $y$-axis.

\begin{defn}
The \textbf{feasible region} for a point $\ol{s} := (0,s) \in \R^2$ is defined as
\[ \feas{s} = \{ z=(z_1,z_2) : 0 \leq z_1 \leq z_2, s \leq z_2 \leq z_1+s\}. \]
\end{defn}
\noindent An illustration of a feasible region is shown in Figure \ref{fig:feas}.

\begin{figure}[ht]
\centering
\includegraphics[width=0.7\textwidth]{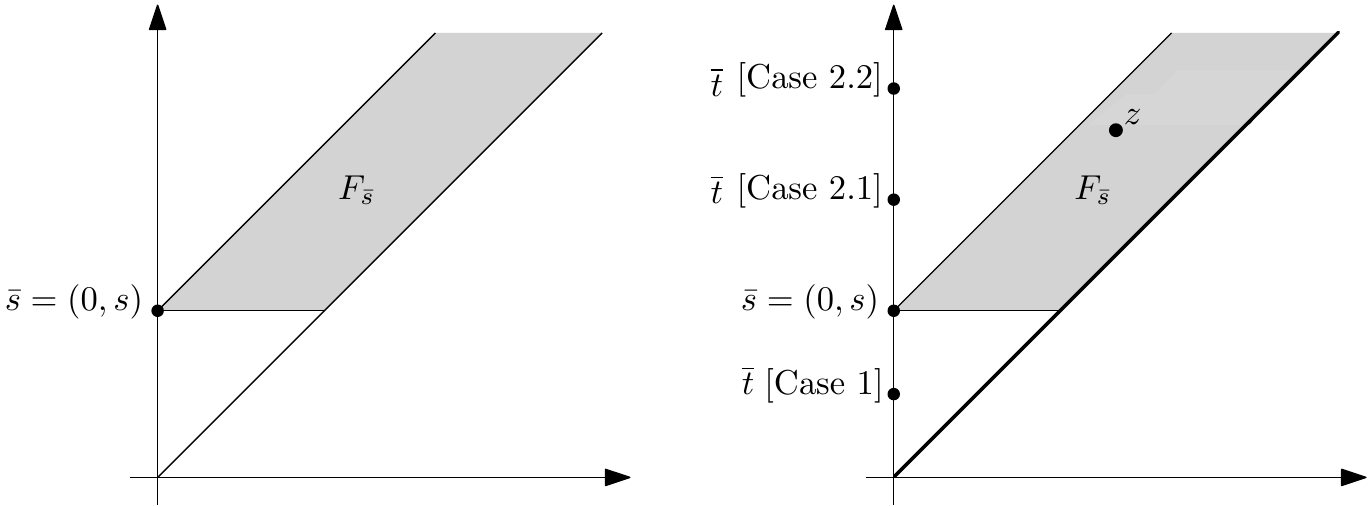}
\caption{Left: An illustration of the feasible region for $\ol{s}$. Right: The three cases within the proof of Lemma \ref{lem:DistPts}.}
\label{fig:feas}
\end{figure}
The following lemma establishes an important property of feasible regions that will be used later in the proof of the main theorem.
\begin{lem}\label{lem:DistPts}
Given any point $z \in F_{\ol{s}}$ and any point $\ol{t}=(0,t)$, $||\ol{s}-\ol{t}||_1 \leq ||z-\ol{t}||_1$.
\end{lem}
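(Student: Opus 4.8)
The plan is to first reduce the claimed inequality to a one-variable comparison. Since both $\ol{s}=(0,s)$ and $\ol{t}=(0,t)$ lie on the $y$-axis, we have $||\ol{s}-\ol{t}||_1 = |s-t|$, and since $z_1 \geq 0$ for every $z=(z_1,z_2) \in \feas{s}$, we have $||z-\ol{t}||_1 = z_1 + |z_2 - t|$. Thus it suffices to establish
\[ |s-t| \leq z_1 + |z_2 - t|. \]
The three defining inequalities of $\feas{s}$ that I would invoke are $z_1 \geq 0$, $z_2 \geq s$, and $z_1 \geq z_2 - s$, the last of these being a rewriting of the constraint $z_2 \leq z_1 + s$.

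Next I would split into the three cases indicated on the right of Figure~\ref{fig:feas}, according to the position of $t$ relative to $s$ and $z_2$. In the first case $t \leq s$, we automatically have $t \leq s \leq z_2$, so both absolute values open with matching signs and the target inequality reduces to $s \leq z_1 + z_2$, which follows immediately from $z_2 \geq s$ and $z_1 \geq 0$. In the second case $s \leq t \leq z_2$, the inequality becomes $2t \leq z_1 + z_2 + s$; using $z_1 \geq z_2 - s$ gives $z_1 + z_2 + s \geq 2z_2 \geq 2t$. In the third case $t \geq z_2 \geq s$, the inequality collapses to $z_2 - s \leq z_1$, which is precisely the feasibility constraint $z_1 \geq z_2 - s$. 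Since $s \leq z_2$, these three cases exhaust all positions of $t$ along the $y$-axis, and the lemma follows.

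I do not anticipate a genuine obstacle here: the content is a short case analysis, and the only point requiring care is checking that the cases are exhaustive and that the correct one of the three feasibility constraints is invoked in each. Conceptually, the statement asserts that every point of $\feas{s}$ is at least as far in $\ell_1$ from any $y$-axis point $\ol{t}$ as $\ol{s}$ itself is; that is, among all $y$-axis targets, $\ol{s}$ is never a worse match for a feasible point than it is for $\ol{s}$'s own coordinate. This is exactly the property that will allow $\ol{s}$ to serve as the optimal target when bounding the bottleneck distance in the proof of the main theorem, so I would phrase the conclusion with that downstream use in mind.
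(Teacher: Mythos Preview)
Your proof is correct and follows essentially the same approach as the paper: the same three-case split according to the position of $t$ relative to $s$ and $z_2$, invoking the same feasibility constraints in each case. The only cosmetic difference is that you first reduce to the inequality $|s-t|\le z_1+|z_2-t|$ before splitting, whereas the paper opens the $\ell_1$ norms inside each case.
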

\begin{proof}
We proceed with a simple case analysis using the definition of $F_{\ol{s}}$.
Let $z = (z_1, z_2)$.
\begin{description}\denselist
\item[Case 1:] Assume $s \geq t$ so that $||\ol{s}-\ol{t}||_1=s-t$. By the definition of $\feas{s}$, we have $z_2 \geq s$ and thus
\[||z-\ol{t}||_1= z_1+z_2-t \geq z_1+s-t \geq s-t=||\ol{s}-\ol{t}||_1.\]
\item[Case 2.1:] If $s < t$, then $||\ol{s}-\ol{t}||_1 =t-s$. If $t \leq z_2$, then since $z_1 \geq z_2-s$ and $z_2 \geq s$,
\[ ||z-\ol{t}||_1 = z_1+z_2-t \geq (z_2-s)+z_2-t \geq t-s+t-t = t-s = ||\ol{s}- \ol{t}||_1.\]
\item[Case 2.2:] If $s < t$ but $t > z_2$, then since $z_2\leq z_1+s$, it follows that
\[||z-\ol{t}||_1= z_1+t-z_2 \geq z_1+t-(z_1+s) = t-s=||\ol{s}-\ol{t}||_1.\]
\end{description}
The lemma now follows.
\end{proof}

\subsection{Properties of the geodesic distance function for an arbitrary metric graph}
\label{subsec:arbitrary}

Let $G = (V, E)$ be an arbitrary metric graph with shortest system of loops of lengths $2s_1, \cdots, 2s_n$.
Fix an arbitrary base point $v \in |G|$ and consider $\Dg{f_v}$, as defined in Section~\ref{sec:distancesbground}.
Let $T_v$ denote the shortest path tree in $G$ rooted at $v$.
We consider the base point $v \in |G|$ to be a graph node of $G$; that is, we add it to $V$ if necessary. We further assume that the graph $G$ is ``generic'' in the sense that there do not exist two or more shortest paths from the base point $v$ to any graph node of $G$ in $V$.
For any input metric graph $G$, we can perturb it to be one that is generic within arbitrarily small Gromov-Hausdorff distance.

For simplicity, when $v$ is fixed, we shall omit $v$ in our notation and speak of the persistence diagram $D :=\Dg{f_v}$, the function $f := f_v$, and the shortest path tree $T := T_v.$

We present three straightforward observations, the first of which follows immediately from the definition of the shortest path tree and the Extreme Value Theorem.

\begin{obs}
\label{lem:DHatNo}
The shortest path tree $T$ of $G$ has $|V|-1$ edges, and there are $|E|-|V|+1$ non-tree edges.
For each non-tree edge $e \in E \setminus T$, there exists a unique $u \in e$ such that  $f(u)$ is a local maximum value of $f$.
\end{obs}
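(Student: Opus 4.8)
The plan is to prove the two assertions separately. The counting statement is immediate: since $G$ is connected, the shortest path tree $T$ rooted at $v$ is a spanning tree of $G$ on the vertex set $V$ (recall that $v$ has been added to $V$ as a graph node), and each of its edges is a full edge of $G$, so $T$ contains exactly $|V|-1$ edges. The remaining $|E|-(|V|-1)=|E|-|V|+1$ edges of $G$ are precisely the non-tree edges.

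For the second assertion, I would fix a non-tree edge $e \in E \setminus T$ with endpoints $a$ and $b$, set $\ell := length(e)$, and let $\alpha:[0,\ell]\to|e|$ be the arclength parametrization with $\alpha(0)=a$ and $\alpha(\ell)=b$. The key structural fact I would establish first is that for every interior point $x=\alpha(t)$, with $0<t<\ell$, the geodesic distance decomposes as
\[ f(x) = \min\{\, f(a)+t,\ f(b)+(\ell-t)\,\}. \]
This holds because the open edge $e$ meets the rest of $|G|$ only at $a$ and $b$, so any path from $v$ to an interior point of $e$ must enter $e$ through one of the two endpoints; a shortest such path therefore travels optimally to $a$ and then a distance $t$ along $e$, or to $b$ and then a distance $\ell-t$, and one minimizes over these two options.

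Next I would observe that $t\mapsto f(a)+t$ is strictly increasing with slope $+1$ and $t\mapsto f(b)+(\ell-t)$ is strictly decreasing with slope $-1$, so their pointwise minimum is strictly increasing and then strictly decreasing, attaining a unique maximum at the crossing point $t^* = \tfrac{1}{2}\bigl(f(b)-f(a)+\ell\bigr)$. Setting $u:=\alpha(t^*)$ gives the desired point, provided $t^*\in(0,\ell)$. Since such a $u$ is interior to $e$, a neighborhood of $u$ in $|G|$ lies entirely within $e$, so a local maximum of $f|_e$ at $u$ is genuinely a local maximum of $f$ on $|G|$, and uniqueness on $e$ follows from the unimodality just described.

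The main obstacle—and the only place the genericity hypothesis is used—is verifying the strict containment $t^*\in(0,\ell)$, equivalently $|f(a)-f(b)|<\ell$. The triangle inequality gives $|f(a)-f(b)|=|d_G(v,a)-d_G(v,b)|\le d_G(a,b)\le \ell$, so the work lies entirely in ruling out equality. I would argue by contradiction: if, say, $f(b)-f(a)=\ell$, then concatenating a shortest $v$-to-$a$ path with the entire edge $e$ produces a shortest $v$-to-$b$ path that traverses $e$; but because $e\notin T$, the tree $T$ already supplies a shortest $v$-to-$b$ path avoiding $e$, so $b$ would admit two distinct shortest paths from $v$, contradicting genericity. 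The symmetric argument excludes $f(a)-f(b)=\ell$. Hence both inequalities are strict, $t^*$ lies in the open interval $(0,\ell)$, and $u=\alpha(t^*)$ is the unique interior point of $e$ at which $f$ attains a local maximum, completing the proof.
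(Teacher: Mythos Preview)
Your proof is correct. In the paper this statement is presented as an observation without a written proof; the only justification given is a one-line remark that it ``follows immediately from the definition of the shortest path tree and the Extreme Value Theorem.'' Your argument goes further than that hint: you write down the explicit piecewise-linear description of $f$ on the interior of a non-tree edge, read off unimodality and the unique crossing point $t^*$, and then invoke the genericity hypothesis to force $t^*$ into the open interval $(0,\ell)$ so that the maximum is genuinely an interior local maximum of $f$ on all of $|G|$. The paper's Extreme Value Theorem pointer, taken literally, only yields existence of a maximum on the closed edge and does not by itself give uniqueness or interior location, so your explicit computation together with the genericity step supplies the missing rigor. The two approaches are compatible in spirit---both rest on the structure of shortest paths through the endpoints of $e$---but yours makes precise exactly where genericity is doing work.
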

Figure~\ref{fig:shortest-path-tree} contains an example of a metric graph on the left and the corresponding shortest path tree illustrating Observation~\ref{lem:DHatNo} in the middle.

\begin{figure}[ht]
\centering
\includegraphics[width=0.7\linewidth]{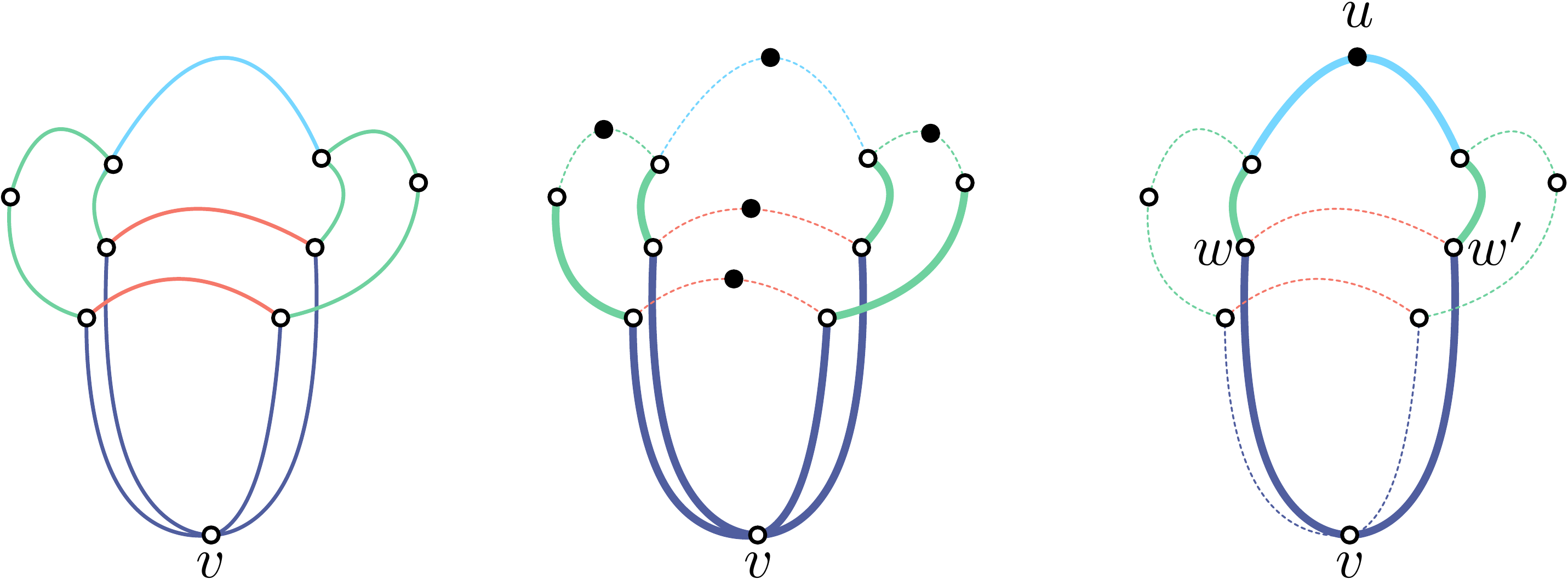}
\caption{Left: An example of a metric graph $G$. The dark blue, light blue, red, and green edges are of lengths $4$, $3$, $2$, and $1$, respectively, and $v$ is the base point.
Middle: An illustration of Observation~\ref{lem:DHatNo} for the shortest path tree $T$ of $G$.
Thick blue and green edges are part of the shortest path tree $T$, and the others are the non-tree edges.
Black points represent local maxima of $f$.
Right: An illustration of Observation~\ref{lem:DHatMax1}. Thick edges are part of $\gamma$, $f(v) = p$, and $p'$ is either $f(w)$ or $f(w')$ depending on the relative positions of $w$ and $w'$.}
\label{fig:shortest-path-tree}
\end{figure}

Note that every feature in the persistence diagram $D$ must be born at a point in the graph that is an up-fork, i.e., a point coupled with a pair of adjacent directions along which the function $f$ is increasing.
Since there are no local minimum points of $f$ (except for $v$ itself), these must be vertices in the graph of degree at least 3 (see, e.g., \cite{OudotSolomon2018}).

\begin{obs}\label{lem:DHatGen1}
The diagram $D$ is a multiset of points $\{(p_i,f(u_i)) \, | \, i \in \{1,\ldots, |E| - |V| + 1\}\}$, where for every $i$, $p_i=f(w)$ for some graph node $w \in V$.
\end{obs}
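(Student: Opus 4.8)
The plan is to identify the one-dimensional features of $D$ directly with the loops of $G$, using the equivalence between the one-dimensional extended persistence of $f$ and the zero-dimensional levelset zigzag persistence of $f$ recalled in Section~\ref{sec:distancesbground}. First I would count the features: since $|G|$ is a one-dimensional complex, every class of $H_1(G)$ is essential, and $\mathrm{rank}\,H_1(G) = |E|-|V|+1$, so the one-dimensional diagram $D$ consists of exactly $|E|-|V|+1$ points. By Observation~\ref{lem:DHatNo} this is also the number of non-tree edges, and each non-tree edge $e$ carries a unique point $u \in e$ at which $f$ attains a local maximum. The remaining task is to match these two sets, both of cardinality $|E|-|V|+1$, and then read off the two coordinates of each point.

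For the death coordinates I would track the components of the level sets $f^{-1}(a)$ (equivalently, the sublevel sets $f^{-1}(-\infty,a]$) as $a$ increases. A new independent one-cycle is completed precisely when $a$ reaches a value at which two ascending branches meet from above, that is, at a \emph{down-fork} of $f$; these are exactly the local maxima $u_i$ lying in the interiors of the non-tree edges. Passing such a point raises $\mathrm{rank}\,H_1$ of the sublevel set by exactly one, and summing over the $|E|-|V|+1$ local maxima accounts for all of $H_1(G)$. This produces a bijection between the features of $D$ and the points $u_i$ of Observation~\ref{lem:DHatNo}, so that the $i$-th feature dies at $f(u_i)$, which is the second coordinate.

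For the birth coordinate $p_i$ I would argue that each feature is born at an \emph{up-fork}, the value at which the corresponding level set first splits into the two branches that later merge at $u_i$, and then show that every up-fork occurs at a graph node. Because $f=f_v$ is a geodesic distance function, on the interior of any edge it is piecewise linear with slopes $\pm 1$ and is monotone toward the nearer endpoint; hence at an interior degree-two point exactly one direction is ascending and one is descending, so $f$ has no up-fork (and no local minimum) in the interior of an edge. Consequently an up-fork can occur only at $v$ or at a vertex of degree at least three, each of which lies in $V$ by our convention that $v$ is a graph node. Thus $p_i = f(w)$ for some $w \in V$, completing the statement.

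The main obstacle is turning the feature-to-loop correspondence into a rigorous bijection rather than a mere matching of cardinalities: one must verify that the down-forks $u_i$ are precisely the values at which $\mathrm{rank}\,H_1$ of the sublevel set increases, and that each increases it by exactly one, while ruling out any interior-edge critical behavior of $f$ from the geodesic structure. Here the genericity assumption on $G$ (no two shortest paths from $v$ reaching a common node) does the work, guaranteeing distinct critical values and simple forks, so that the levelset-zigzag bookkeeping assigns a well-defined pair $(p_i, f(u_i))$ to each loop.
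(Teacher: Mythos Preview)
Your proposal is correct and follows essentially the same line as the paper's own justification, which is given in the short paragraph immediately preceding the observation: births occur at up-forks of $f$, and since $f = d_G(v,\cdot)$ has no local minima other than $v$, an up-fork can occur only at a graph node of degree at least three (the paper cites \cite{OudotSolomon2018} rather than spelling this out). Your version is more self-contained in that you also explicitly count the features and match the death coordinates to the $u_i$ of Observation~\ref{lem:DHatNo}, which the paper leaves implicit.

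One small imprecision worth correcting: your sentence ``at an interior degree-two point exactly one direction is ascending and one is descending'' is false precisely at the local maxima $u_i$ inside the non-tree edges, where both directions are descending. This does not harm your conclusion---such points are down-forks, not up-forks---but the cleaner statement is the one the paper uses: there are no local \emph{minima} of $f$ in the interior of any edge (geodesic distance from $v$ cannot have an interior local minimum), and hence no interior up-forks.
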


The final observation relates to points belonging to cycles in $G$ that yield local maximum values of $f$ (see~\cite{AgarwalEdelsbrunnerHarer2006}).

\begin{obs}
\label{lem:DHatMax1}
Let $\gamma$ be an arbitrary cycle in $G$. If $u$ is the point in $\gamma$ corresponding to the largest local maximum value of $f$, let $p$ be the lowest function value of $f$ for all points in $\gamma$.
Then there is a point in the persistence diagram $D$ of the form $(p',f(u))$, where $p' \geq p.$
\end{obs}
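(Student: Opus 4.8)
The plan is to exhibit, directly in the $1$-dimensional extended persistence module of $f$, an interval (bar) whose ``death'' coordinate is $f(u)$ and whose ``birth'' coordinate is at least $p$; reading that bar as a point of $D$ then yields the desired $(p',f(u))$ with $p' \ge p$. Throughout I would use that $\gamma$ is compact, so the largest \emph{local} maximum value of $f$ on $\gamma$ is in fact its \emph{global} maximum, i.e.\ $f(u) = \max_{x \in \gamma} f(x)$, while $p = \min_{x \in \gamma} f(x)$. I would also record once the defining feature of a graph: since $|G|$ has no $2$-cells, for any subgraph the cycle space equals $H_1$, all inclusion-induced maps on $H_1$ are injective, and every homology class is represented by a \emph{unique} $1$-cycle. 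Write $G_{\le a} = f^{-1}(-\infty,a]$ for the sublevel sets (the ascending part) and $G_{\ge a} = f^{-1}[a,\infty)$ for the superlevel sets governing the relative/descending part of extended persistence.

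First I would pin down the birth. Because $f(u) = \max_\gamma f$, we have $\gamma \subseteq G_{\le f(u)}$, whereas for every $a < f(u)$ the point $u$ is excluded, so $\gamma \not\subseteq G_{\le a}$. By uniqueness of cycle representatives in a graph, a class lies in the image $H_1(G_{\le a}) \to H_1(G_{\le f(u)})$ if and only if its representing cycle already lies in $G_{\le a}$; hence $[\gamma]$ is \emph{not} in that image for any $a < f(u)$. Decomposing the extended $H_1$ persistence module into interval modules, the element $[\gamma]$ (read at parameter $f(u)$) therefore has a nonzero component $g_0$ in an interval born exactly at $f(u)$; write $[\gamma] = g_0 + r$, where $r$ collects the components born strictly before $f(u)$. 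Since a graph has no ordinary $1$-dimensional pairs (a $1$-cycle can never die in the ascending filtration), $g_0$ is an extended bar: born at $f(u)$ and dying at some value $p'$ in the descending part, contributing the point $(p',f(u))$ to $D$.

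Next I would bound the death from below. For any $a \le p$ we have $\gamma \subseteq G_{\ge a}$, so $[\gamma]$ lies in the image of $H_1(G_{\ge a}) \to H_1(|G|)$ and hence maps to $0$ under $H_1(|G|) \to H_1(|G|, G_{\ge a})$. Pushing the identity $[\gamma] = g_0 + r$ forward into this relative group and using that distinct interval summands are independent, the image of $g_0$ and the image of each summand in $r$ must vanish separately. In particular the image of $g_0$ is zero for \emph{every} $a \le p$, which says the interval carrying $g_0$ has already ended by the time the descending parameter reaches $p$; equivalently $p' \ge p$. Thus $(p',f(u)) \in D$ with $p' \ge p$, which is the claim.

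The step I expect to be the crux is the passage from the \emph{element} $[\gamma]$ — a priori a sum of several bars born at different parameters — to a \emph{single} bar $g_0$ with a controlled death value; this is exactly where the interval decomposition and the independence of summands do the work, and it is also what forces only $p' \ge p$ rather than equality: the persistence pairing selects a ``tight'' cycle through $u$ whose lowest point can sit strictly above $\min_\gamma f$ when $\gamma$ is a wasteful representative that dips near the base point (as already happens for a cycle that runs around a short essential loop and then descends toward $v$ and back). The two supporting facts — that a class is born precisely when its unique representing cycle is first covered, and that it is capped precisely once that cycle is swallowed by a superlevel set — both rely on $|G|$ having no $2$-cells, so I would isolate that structural remark at the outset and invoke it in both the birth and the death computations.
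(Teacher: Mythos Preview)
The paper does not supply its own proof of this observation; it records the statement and points the reader to~\cite{AgarwalEdelsbrunnerHarer2006}. Your argument is correct and fills in that gap cleanly: using the interval decomposition of the extended $H_1$ persistence module, you show that $[\gamma]$ has a nonzero component in a summand born exactly at $f(u)$ (since $[\gamma]$ is not in the image from any earlier sublevel set, which in a graph follows because homology classes have unique cycle representatives), and then bound the descending death of that summand by pushing $[\gamma]=g_0+r$ into $H_1(|G|,G_{\ge a})$ and using independence of the summands. One small point you pass over quickly is why the largest local maximum of $f$ on $\gamma$ coincides with $\max_{x\in\gamma} f(x)$: this uses the genericity assumption and the structure of the geodesic distance function (at any vertex $w$ on $\gamma$ there is at most one incident edge along which $f$ decreases, so $f|_\gamma$ cannot peak at a vertex and hence attains its maximum at an interior local maximum of $f$). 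With that remark added, your proof is self-contained.
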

\noindent See Figure~\ref{fig:shortest-path-tree} (right) for an illustration.

To delve further into this, let $\{\gamma_1,\ldots,\gamma_n\}$ denote the elements of the shortest system of loops for $G$ listed in order of non-decreasing loop length.

\begin{lem}
\label{lem:gvtake2}
Consider a cycle $\gamma=\gamma_{i_1} + \ldots + \gamma_{i_m}$ ($m \leq n$), where each $\gamma_{i_k}$ ($1 \leq k \leq m$) is an element of the shortest system of loops for $G$ and $i_1\leq i_2 \leq \ldots \leq i_m$.
Suppose the edge $e \in \gamma$ contains the point $u$ in $\gamma$ with the largest local maximum value of $f$.
Then $f(u)\geq s_{i_m}$, where $s_{i_m}$ is half the length of cycle $\gamma_{i_m}$ in the shortest system of loops.
\end{lem}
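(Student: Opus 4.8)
The plan is to bound the global maximum of $f$ on $\gamma$ from below by producing a short loop whose homology class is ``as high as'' that of $\gamma$ in the shortest system, and then invoking the defining minimality of that system. Throughout I write $W = \langle [\gamma_1], \ldots, [\gamma_{i_m-1}] \rangle$ for the span of the homology classes of the loops in the shortest system of index strictly less than $i_m$, and I note that, since $u$ realizes the largest local maximum of $f$ on $\gamma$, it is in fact the global maximum of $f$ over the compact support $|\gamma|$; hence it suffices to exhibit a single point of $|\gamma|$ at height at least $s_{i_m}$.

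First I would express $[\gamma]$ in terms of the fundamental loops of the shortest path tree $T$. For each non-tree edge $e' \in E \setminus T$ in the support of $\gamma$, let $\gamma_{e'}$ denote the fundamental loop consisting of $e'$ together with the tree path joining its endpoints; then $[\gamma] = \sum_{e'} [\gamma_{e'}]$, the sum being taken over all non-tree edges appearing in $\gamma$. Because $\gamma = \gamma_{i_1} + \cdots + \gamma_{i_m}$ has a nonzero $[\gamma_{i_m}]$-coefficient (as $i_m$ is the largest index occurring and the classes $[\gamma_j]$ are independent), we have $[\gamma] \notin W$. Since $W$ is a subspace, not every summand can lie in $W$, so there exists a non-tree edge $e_0$ in the support of $\gamma$ with $[\gamma_{e_0}] \notin W$.

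Next I would convert this homological statement into a height bound. By Observation~\ref{lem:DHatNo}, $e_0$ carries a unique point $u_0$ at which $f$ attains a local maximum; concatenating the two shortest paths from $v$ to $u_0$ (one entering $e_0$ through each endpoint) yields a loop $\beta_{e_0}$ of length exactly $2 f(u_0)$ that is homologous to $\gamma_{e_0}$, so $[\beta_{e_0}] = [\gamma_{e_0}] \notin W$. The defining property of a shortest system of loops is that $2 s_{i_m}$ is the minimum length of any cycle whose class lies outside $W$; applying this to $\beta_{e_0}$ gives $2 f(u_0) = \Len(\beta_{e_0}) \geq 2 s_{i_m}$, i.e., $f(u_0) \geq s_{i_m}$. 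Since $u_0 \in |\gamma|$ and $u$ is the global maximum of $f$ on $|\gamma|$, we conclude $f(u) \geq f(u_0) \geq s_{i_m}$, as desired.

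The main obstacle is the step invoking minimality: I must justify that any cycle whose homology class is not contained in $W$ has length at least $2 s_{i_m}$. This is the characteristic property of the lexicographically shortest system of loops (it is precisely why $\gamma_{i_m}$ was selected, being a shortest cycle independent of $\gamma_1, \ldots, \gamma_{i_m-1}$), and I would state and use it as such, citing the relevant discussion in \cite{GasparovicGommelPurvine2018}. Two smaller points need care alongside it: verifying that $\beta_{e_0}$ is genuinely homologous to $\gamma_{e_0}$ and has length $2 f(u_0)$ (this uses that $u_0$ is the meeting point of the two wavefronts emanating from $v$, so the two geodesics have equal length $f(u_0)$ and close up across $e_0$), and dispensing with the degenerate possibility $[\gamma_{e_0}] = 0$, which cannot occur here precisely because $[\gamma_{e_0}] \notin W$.
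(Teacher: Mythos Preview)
Your argument is correct and follows essentially the same strategy as the paper's proof: both decompose $\gamma$ into fundamental cycles of the shortest path tree $T$ and then invoke the minimality property of the shortest system (any cycle whose class is independent of $\gamma_1,\ldots,\gamma_{i_m-1}$ has length at least $2s_{i_m}$). The only cosmetic difference is that the paper phrases this by contradiction---assuming $f(u)<s_{i_m}$ and showing all relevant fundamental cycles are too short---whereas you argue directly by singling out one non-tree edge $e_0\subseteq|\gamma|$ whose fundamental loop lies outside $W$ and reading off $f(u)\ge f(u_0)\ge s_{i_m}$.
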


\begin{proof}
Since each $\gamma_{i_k}$ ($1 \leq k \leq m$) is an element of the shortest system of loops for $G$ and $i_1\leq i_2 \leq \ldots \leq i_m$, this implies that $s_{i_1} \leq \cdots \leq s_{i_m}$, where $2s_{i_k}$ is the length of cycle $\gamma_{i_k}$ in the shortest system of loops of $G$.

Assume instead that $f(u)<s_{i_m}$.
Now, $\gamma$ in $G$ must contain at least one non-tree edge as it is a cycle.
Let $e_1, \ldots e_\ell = e$ be all non-tree edges of $G$ with largest function value at most $f(u)$. Assume they contain maximum points $u_1,\ldots,u_\ell=u$, respectively, where the edges and maxima are sorted in order of increasing function value of $f$.

For two points $x, y \in |T|$, let $\alpha(x,y)$ denote the unique tree path from $x$ to $y$ within the shortest path tree.
For each $j\in\{1,\ldots,\ell\}$, let $e_j=(e^0_j,e^1_j)$ and let $c_j$ denote the cycle $c_j=\alpha(v,e^1_j)\circ e_j \circ \alpha(e^0_j,v)$.
By assumption, since $u =u_\ell$ is the point in $\gamma$ with the largest local maximum value of $f$ and $f(u)<s_{i_m}$, it follows that the length of every cycle $c_j$ is less than $s_{i_m}$.
However, the set of cycles $\{c_1,\ldots,c_\ell\}$ form a basis for the subgraph of $G$ spanned by all edges containing only points of function value at most $f(u)$.
Therefore, we may represent $\gamma$ as a linear combination of cycles from the set $\{c_1, \ldots, c_\ell \}$, i.e., $\gamma$ may be decomposed into shorter cycles, each of length less than $s_{i_m}  =\ds\frac{length(\gamma_{i_m})}{2}$.
This is a contradiction to the fact that $\gamma_{i_1},\ldots,\gamma_{i_m}$ are elements of the shortest system of loops for $G$.
Hence, we conclude that $f(u)\geq s_{i_m}$.
\end{proof}

\begin{figure}[ht]
\centering
\includegraphics[width=0.25\linewidth]{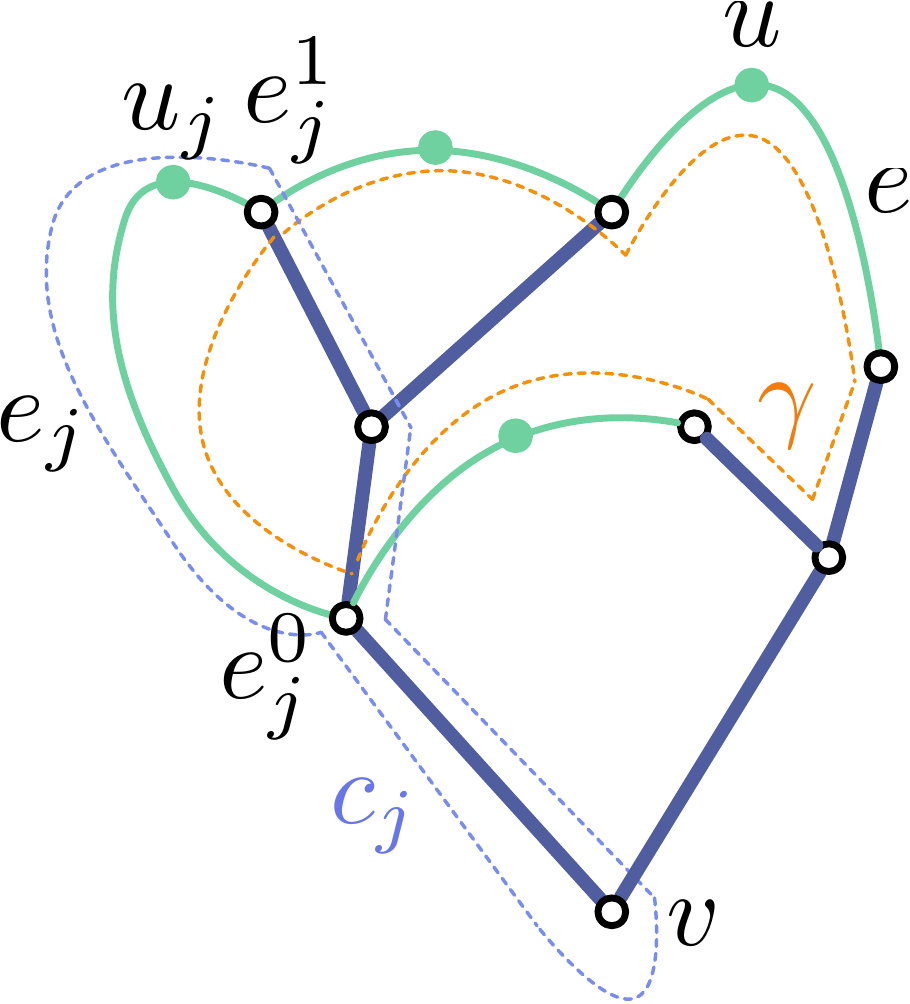}
\caption{An illustration of the proof of Lemma~\ref{lem:gvtake2}. In this case, $\gamma$ is the sum of three smaller cycles and there are four non-tree edges highlighted in green. One $c_j$ is shown corresponding to the local maximum $u_j$.
}
\label{fig:lemma6}
\end{figure}

An example that illustrates the proof of Lemma~\ref{lem:gvtake2} is shown in Figure~\ref{fig:lemma6}. Later we will use the following simpler version of Lemma \ref{lem:gvtake2}, where $\gamma$ is a single element of the shortest system of loops.

\begin{cor}\label{lem:DHatMaxVsSi}
Let $\gamma$ be an element of the shortest system of loops for $G$ with a length $2s$, and let $u$ denote the point in any edge of $\gamma$ with largest maximum value of $f$. Then $f(u) \geq s$.
\end{cor}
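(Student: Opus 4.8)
The plan is to derive this corollary as the special case $m = 1$ of Lemma \ref{lem:gvtake2}, which is already proved. A single element $\gamma$ of the shortest system of loops is exactly a cycle of the form $\gamma = \gamma_{i_1} + \cdots + \gamma_{i_m}$ with $m = 1$; then $\gamma = \gamma_{i_1}$, so $i_1 = i_m$, and since $\gamma$ has length $2s$, half of its length is $s = s_{i_1} = s_{i_m}$ in the notation of the lemma. The point $u \in \gamma$ attaining the largest local maximum value of $f$ across all edges of $\gamma$ is precisely the point $u$ appearing in the lemma's statement. Thus I would simply invoke Lemma \ref{lem:gvtake2} to conclude $f(u) \geq s_{i_m} = s$.

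The only thing to verify is the bookkeeping that the corollary's single-loop hypothesis coincides with the $m = 1$ instance of the lemma, and that the half-length $s$ of $\gamma$ corresponds to $s_{i_m}$; beyond this there is no real obstacle. One could instead reprove the statement from scratch by repeating the contradiction argument of Lemma \ref{lem:gvtake2} in this simpler setting: assuming $f(u) < s$ forces every point of $\gamma$ to lie below height $s$, so all the tree-cycles $c_j$ associated with the relevant non-tree edges have length less than $2s$, and since they span the low sublevel subgraph, $\gamma$ would decompose into strictly shorter cycles, contradicting that $\gamma$ belongs to the shortest system of loops. The genuinely nontrivial ingredient of that route --- showing the $c_j$ form a basis for the cycle space of the sublevel subgraph --- is exactly the step already handled inside the proof of Lemma \ref{lem:gvtake2}, so direct specialization is the cleaner path.
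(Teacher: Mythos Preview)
Your proposal is correct and matches the paper's approach exactly: the paper presents this corollary simply as the special case $m=1$ of Lemma~\ref{lem:gvtake2}, with no separate proof given. Your bookkeeping identifying $s$ with $s_{i_m}$ and $u$ with the lemma's maximum point is all that is needed.
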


\subsection{The main theorem and its proof}

We are now ready to establish a comparison of the intrinsic \Cech{} and persistence distortion distances between a bouquet metric graph and an arbitrary metric graph.

\begin{thm}
\label{thm:main}
Let $G_1$ and $G_2$ be finite metric graphs such that $G_1$ is a bouquet graph
and $G_2$ is arbitrary.
Then
\[ d_{IC}(G_1, G_2) \leq \ds \frac{1}{2}d_{PD}(G_1, G_2). \]
\end{thm}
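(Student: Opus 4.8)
The plan is to prove the equivalent inequality $2\,d_{IC}(G_1,G_2) \le d_{PD}(G_1,G_2)$. By Corollary~\ref{cor:ICDist} the left-hand side equals $\max_{i} |s_i - t_i|$, where $0\le t_1 \le \cdots \le t_n$ and $0 \le s_1 \le \cdots \le s_n$ are the (padded) half-lengths of the shortest systems of loops of the bouquet $G_1$ and of $G_2$; so it suffices to exhibit, inside the Hausdorff distance defining $d_{PD}$, a pair of persistence diagrams whose bottleneck distance is at least $\max_i|s_i - t_i|$. I would fix an index $i^\ast$ achieving this maximum and split into the two cases $s_{i^\ast}\ge t_{i^\ast}$ and $t_{i^\ast} > s_{i^\ast}$, using the two directed terms of the Hausdorff distance respectively.

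First I would record the shape of the two families of diagrams. For the bouquet, taking the base point to be the central vertex gives the diagram $D_1^c=\{(0,t_1),\dots,(0,t_n)\}$, all of whose $1$-dimensional points lie on the $y$-axis. For $G_2$ and an arbitrary base point $v$, Observations~\ref{lem:DHatGen1}--\ref{lem:DHatMax1} together with Lemma~\ref{lem:gvtake2} and Corollary~\ref{lem:DHatMaxVsSi} let me control the diagram $D_2=\Dg{f_v}$: its $n$ off-diagonal points, sorted by death value, have deaths dominating $s_1,\dots,s_n$, and each point $z$ arising from a loop of half-length $s$ satisfies $z\in \feas{s}$, since its death is at least $s$ while its persistence is at most $s$ (its birth is at least the minimum of $f_v$ on the loop by Observation~\ref{lem:DHatMax1}, and the oscillation of $f_v$ along a loop of length $2s$ is at most $s$). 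The deaths-dominate claim I would obtain by a greedy/matroid argument in the spirit of Lemma~\ref{lem:gvtake2}: if more than $k$ non-tree edges had local maxima below $s_{k}$, their fundamental cycles would produce more than $k$ independent loops shorter than $2s_k$, contradicting minimality of the shortest system.

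With these structural facts, the engine of the estimate is Lemma~\ref{lem:DistPts}: whenever a point $z\in\feas{s_i}$ of $D_2$ is matched to a $y$-axis point $(0,t_j)$ of a bouquet diagram, the cost of that pair is at least $\|(0,s_i)-(0,t_j)\|_1=|s_i-t_j|$, so any matching that avoids the diagonal on the relevant points costs at least $\min_{\sigma}\max_i|s_i-t_{\sigma(i)}|=\max_i|s_i-t_i|$, the last equality being exactly the content of Theorem~\ref{thm:ICDist}. Thus the problem reduces to guaranteeing that the critical loop cannot be discarded to the diagonal for free. This is where the case split and base-point choice enter: in the case $s_{i^\ast}\ge t_{i^\ast}$ I would place the base point of $G_2$ on the loop $\gamma_{i^\ast}$, so that it contributes the point $(0,s_{i^\ast})$ of full persistence $s_{i^\ast}\ge s_{i^\ast}-t_{i^\ast}$, which therefore cannot be matched to the diagonal below cost $s_{i^\ast}-t_{i^\ast}$; symmetrically, in the case $t_{i^\ast}>s_{i^\ast}$ the bouquet's central diagram already contains the high-persistence point $(0,t_{i^\ast})$, and I would bound the corresponding directed Hausdorff term from below.

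I expect the main obstacle to be precisely this control of \emph{diagonal matchings}. Domination of death values alone is not enough, because a point with large death but large birth sits close to the diagonal and can be matched there cheaply; consequently the naive inequality $d_B(D_1^c,D_2)\ge d_B(D_1^c,\{(0,s_i)\})$ is \emph{false} in general. Overcoming this requires combining the feasible-region containment (which forces the \emph{matched}, non-diagonal cost to be at least $|s_i-t_j|$) with the maximality of $i^\ast$ (which, via Theorem~\ref{thm:ICDist}, prevents the remaining points from being rematched to absorb the deficit) and with the base-point choice that endows the critical loop with enough persistence to rule out its collapse to the diagonal. Assembling these three ingredients into a single matching-cost lower bound, uniformly over all base points of the non-witness graph, is the delicate part of the argument.
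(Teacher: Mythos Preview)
Your proposal assembles the right raw materials---the feasible regions $F_{\overline s}$, Lemma~\ref{lem:DistPts}, and the formula $2\delta=\max_i|s_i-t_i|$---and you correctly flag the diagonal matching as the delicate point. But two things separate your sketch from a proof.

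First, the case split and the choice of a base point in $G_2$ are a detour. The paper uses \emph{only one} directed term of the Hausdorff distance: it fixes $D_1=\Dg{f_o}$ at the bouquet's wedge point $o$, so every point of $D_1$ lies on the $y$-axis and Lemma~\ref{lem:DistPts} applies to \emph{every} pair. One then shows $d_B(D_1,D_2)\ge 2\delta$ for \emph{all} $v\in|G_2|$, which immediately bounds $\inf_{D_2}d_B(D_1,D_2)$ and hence $d_{PD}$. Your Case~1 instead chooses a base point in $G_2$ and must then control $\inf_{D_1}d_B(D_1,D_2)$ over \emph{all} bouquet base points; for non-central base points the bouquet diagram has points off the $y$-axis, so Lemma~\ref{lem:DistPts} no longer applies as stated, and you have bought yourself a harder problem rather than an easier one. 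Once you work with the central bouquet diagram against all $D_2$, no case split on the sign of $s_{i^\ast}-t_{i^\ast}$ is needed.

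Second, and more seriously, you have not established the one fact that does the real work: a \emph{bijective} assignment $z_i\mapsto \overline{s_i}$ with $z_i\in F_{\overline{s_i}}$ for the $n$ off-diagonal points of $D_2$. Your sentence ``each point $z$ arising from a loop of half-length $s$ satisfies $z\in\feas{s}$'' hides exactly this difficulty: the naive map sending $\gamma_i$ to the persistence point whose death equals $\max_{x\in\gamma_i}g_v(x)$ is \emph{not} injective in general (several $\gamma_i$ can share the same top non-tree edge), and your deaths-dominate argument only gives the inequality $d_{(i)}\ge s_i$ on sorted deaths, not the persistence bound $d-b\le s_i$ for a coherent one-to-one assignment. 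The paper fills this gap via Hall's marriage theorem (Theorem~\ref{thm:perfect}): for every subset $S\subseteq\{\overline{s_{i_1}},\dots,\overline{s_{i_a}}\}$ it produces $a$ distinct neighbors in $D_2$ by an iterative procedure that, whenever the top edge of the current cycle is already claimed, adds the previously associated cycle to form a new cycle $\widetilde{\gamma}$ and shows (Lemma~\ref{lem:gvineq}) that its height stays $\le s_{i_{k+1}}$ while Lemma~\ref{lem:gvtake2} keeps the death $\ge s_{i_{k+1}}$. This cycle-surgery under height control is the technical heart of the argument, and nothing in your greedy/matroid sketch supplies it.
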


\begin{proof}
Let $G_1$ be a bouquet graph consisting of $m$ cycles of lengths $0 < 2t'_1\leq\ldots\leq 2t'_m$, all sharing one common point $o \in |G_1|$.
Let $G_2$ be an arbitrary metric graph with shortest system of loops consisting of $n$ loops of lengths $2s_1,\cdots,2s_n$ listed in non-decreasing order.
In what follows, we suppose $n \geq m$; the case when $m \geq n$ proceeds similarly. As before, we obtain a sequence of length $n$,
$2t_1 \leq 2t_2 \cdots \leq 2t_n$ (where $t_1 = \cdots = t_{n-m} = 0$, $t_{n-m+1} =t'_1,\cdots,$ and $t_n=t'_m$).
Let $f$ and $g$ denote the geodesic distance functions on $G_1$ and $G_2$, respectively.

First, as in Corollary~\ref{cor:ICDist}, the intrinsic \Cech{} distance between $G_1$ and $G_2$, denoted by $\delta$, is
 \begin{equation}
 \label{eq:delta}\delta:=d_{IC}(G_1, G_2) = \ds\max_{i=1}^n \frac{|s_i - t_i|}{2}.\end{equation}

Second, note that the persistence diagram $D_1:=\Dg{f_o}$ with respect to the base point $o$ is $D_1=\{(0, t_1), \cdots, (0, t_n)\}$ (of course, this may include some copies of $(0,0)$ if $m < n$).
Next, fix an arbitrary base point $v \in |G_2|$ and consider the persistence diagram $D_2 := \Dg{g_v}$.
Consider the abstract persistence diagram $D^\star:= \{(0,s_1), \cdots, (0,s_n) \}=\{\ol{s_1},\ldots,\ol{s_n}\}$ that consists only of points on the $y$-axis at the $s_i$ values.
Unless $G_2$ is also a bouquet graph, $D^\star$ is not necessarily in $\Phi(|G_2|)$.
Nevertheless, we will use this persistence diagram as a point of comparison and relate points in $D_2$ to $D^\star$.
Notice that a consequence of Theorem~\ref{thm:ICDist} is that
\begin{equation}
\label{claim1}
 d_B(D_1, D^\star) = \ds\max_{i=1}^n |s_i - t_i| = 2\delta.
\end{equation}

In order to accomplish our objective of relating points in $D_2$ with points in the ideal diagram $D^\star$, we need the following lemma relating to feasible regions, which were introduced in Section~\ref{subsec:feasible}.

\begin{lem}
\label{lem:feasPts}
Let $D' = \{z_1, \ldots, z_n\}$ be an arbitrary persistence diagram such that $z_i \in \feas{s_i}$.
Then $d_B(D_1, D^\star) \leq d_B(D_1, D')$.
\end{lem}
\begin{proof}
Consider the optimal bottleneck matching between $D_1$ and $D'$.
According to Lemma~\ref{lem:DistPts}, if the point $\ol{t_j}=(0,t_j) \in D_1$ is matched to $z_i \in D'$ under this optimal matching, the matching of $\ol{s_i}=(0,s_i)\in D^\star$ to $\ol{t_j}$ will yield a smaller distance.
In other words, the induced bottleneck matching between $D_1$ and $D^\star$, which is equal to $2\delta$, can only be smaller than $d_B(D_1,D')$.
\end{proof}

The outline of the remainder of the proof of Theorem \ref{thm:main} is as follows. Theorem \ref{thm:perfect} shows that one can assign points in $D_2$ to the points in $D^\star$ in such a way that the condition in Lemma~\ref{lem:feasPts} is satisfied.
The fact that one can assign points in the fixed persistence diagram $D_2$ to the distinct feasible regions $\feas{s_i}$ relies on the series of structural observations and results in Section~\ref{subsec:arbitrary}, along with an application of Hall's marriage theorem.
Finally, the inequality in Lemma~\ref{lem:feasPts} and the definition of the persistence distortion distance imply that
\begin{equation}
2\delta=d_B(D_1, D^\star) \leq \inf_{v \in |G_2|} d_B(D_1,D_2) \leq d_{PD}(G_1,G_2),
\end{equation}
which, together with \eqref{eq:delta}, completes the proof of Theorem~\ref{thm:main}.

The following theorem establishes the existence of a one-to-one correspondence between points in $D^\star$ and points in $D_2$. The goal is to construct a bipartite graph $\widehat{G} = (D^\star, D_2, \widehat{E})$, where there is an edge $\hat{e} \in \widehat{E}$ from $\ol{s_i}\in D^\star$ to $z \in D_2$ if and only if $z \in F_{\ol{s_i}}$. To prove the theorem, we 
invoke Hall's marriage theorem, which requires showing that for any subset $S$ of points in $D^\star$, the number of neighbors of $S$ in $D_2$ is at least $|S|$.

\begin{thm}\label{thm:perfect}
The graph $\widehat{G}$ contains a perfect matching.
\end{thm}
\begin{proof}
For simplicity, let $T = T_v$ and $g = g_v$.
First, note that there is a one-to-one correspondence $\Psi: E_2 \setminus T \to D_2$ between the set of non-tree edges in $G_2$ (each of which contains a unique maximum point of $g$) and the set of points in $D_2$. In particular, from Observations \ref{lem:DHatNo} and \ref{lem:DHatGen1}, the death-time of each point in $ D_2$ uniquely corresponds to a local maximum $u_e$ within a non-tree edge $e$ of $G_2$.

Fix an arbitrary subset $S \subseteq D^\star$ with $|S|=a$.
In order to apply Hall's marriage theorem, we must show that there are at least $a$ neighbors of $S$ in $\widehat{G}$. We achieve this via an iterative procedure which we now describe. The procedure begins at step $k=0$ and will end after $a$ iterations. Elements in $S = \{\ol{s_{i_1}}, \ldots, \ol{s_{i_a}}\}$ are processed in non-decreasing order of their values, which also means that $i_1 < i_2 < \cdots < i_a$. At the start of the $k$-th iteration, we will have processed the first $k$ elements of $S$, denoted $S_k=\{\ol{s_{i_1}},\ldots,\ol{s_{i_k}}\}$, where for each $\ol{s} := \ol{s_{i_h}}\in S_k$ that we have processed ($1 \leq h \leq k$), we have maintained the following three invariances:

\begin{description}\denselist
\item[\textbf{Invariance 1:}] $\bar{s}$ is associated to a unique edge $e_{\bar{s}}\in E_2\setminus T$ containing a unique maximum
$u_{e_{\bar{s}}}$ such that $\Psi(e_{\bar{s}}) \in D_2$ is a neighbor of $\bar{s}$. We say that $e_{\bar{s}}$ and $u_{e_{\bar{s}}}$ are \emph{marked by $\bar{s}$}.
\item[\textbf{Invariance 2:}]: $\bar{s}$ is also associated to a cycle $\widetilde{\gamma}_{h} = \gamma_{i_h} + \sum \gamma_\ell$ (where the sum ranges over all $\ell$ belonging to some index set $J_h \subset \{1,\ldots, i_{h}-1\}$), such that $e_{\ol{s}}$ contains the point in $\widetilde{\gamma}_{h}$ with the largest value of $g$.
\item[\textbf{Invariance 3:}]: $\myheight(\widetilde{\gamma}_{h}) \le s_{i_h} $, where $\myheight(\gamma) =\displaystyle \max_{x\in\gamma} g(x) - \min_{x\in \gamma} g(x)$ represents the height (i.e., the maximal difference in the $g$ function values) of a given loop $\gamma$.
\end{description}

\noindent Set $\overline{S_k} = S \setminus S_k = \{\overline{s_{i_{k+1}}},\ldots,\overline{s_{i_{a}}}\}$, denoting the remaining elements from $S$ to be processed.
Our goal is to identify a new neighbor in $D_2$ for element $\overline{s_{i_{k+1}}}$ from $\overline{S_k}$ satisfying the three invariances. Once we have done so, we will then set $S_{k+1} = S_k \cup \{s_{i_{k+1}}\}$ and move on to the next iteration in the procedure.

Note that $\overline{s_{i_{k+1}}}$ corresponds to an element $\gamma_{i_{k+1}}$ of the shortest system of loops for $G_2$.
Let $e$ be the edge in $\gamma_{i_{k+1}}$ containing the maximum $u_e$ of highest $g$ function value among all edges in $\gamma_{i_{k+1}}$.
There are now two possible cases to consider, and we will demonstrate how to obtain a new neighbor for $\overline{s_{i_{k+1}}}$ in either case.

In the first case, suppose $u_e$ is not yet marked by a previous element in $S$. In this case, $e_{\overline{s_{i_{k+1}}}}=e$ and $\widetilde{\gamma}_{i_{k+1}} = \gamma_{i_{k+1}}$.
We claim that the point $(p_e, g(u_e))$ in the persistence diagram $D_2$ corresponding to the maximum $u_e$ is contained in the feasible region $F_{\overline{s_{i_{k+1}}}}$.
In other words,  $s_{i_{k+1}}\leq g(u_e) \leq  p_e+s_{i_{k+1}}$.
Indeed, by Lemma~\ref{lem:gvtake2}, $s_{i_{k+1}}\leq g(u_e)$, and by Observation~\ref{lem:DHatMax1},
\[g(u_e)-s_{i_{k+1}}\leq lowest(\gamma_{i_{k+1}})\leq p_e,\]
where $lowest(\gamma_{i_{k+1}}):= \ds\min_{x\in \gamma_{i_{k+1}}} g(x)$.
Thus, $(p_e,g(u_e))\in D_2$ is a new neighbor for $\ol{s_{i_{k+1}}}\in S$ since it is contained in $F_{\ol{s_{i_{k+1}}}}$. Consequently, we mark $e$ and $u_e$ by $\ol{s_{i_{k+1}}}$ and continue with the next iteration.

In the second case, the maximum point $u_e$ has already been marked by a previous element $s_{j_1} \in S_k$ and been associated to a cycle $\widetilde{\gamma}_{j_1}$. Observe that $s_{j_1} \le s_{i_{k+1}}$ since our procedure processes elements of $S$ in non-decreasing order of their values (and thus $j_1 < i_{k+1}$).
We must now identify an edge other than $e$ for $s_{i_{k+1}}$ satisfying the three invariance properties.
To this end, let $\widehat{\gamma}_1 = \gamma_{i_{k+1}} + \widetilde{\gamma}_{j_1}$, and let $e_1$ be the edge containing the maximum in $\widehat{\gamma}_1$ with largest function value. If $e_1$ is unmarked, we set $e_{\overline{s_{i_{k+1}}}}=e_1$. Otherwise, if $e_1$ is marked by some cycle $\gamma_{j_2}$, we construct the loop $\widehat{\gamma}_2=\widehat{\gamma}_1+\widetilde{\gamma}_{j_2}=\gamma_{i_{k+1}}+\widetilde{\gamma}_{j_1}+\widetilde{\gamma}_{j_2}$.
We continue this process until we find $\widehat{\gamma}_\eta=\gamma_{i_{k+1}}+\widetilde{\gamma}_{j_1}+\widetilde{\gamma}_{j_2}+ \ldots + \widetilde{\gamma}_{j_\eta}$ such that the edge $e_\eta$ containing the point of maximum function value of $\widehat{\gamma}_\eta$ is not marked.
Once we arrive at this point, we set $\widetilde{\gamma}_{i_{k+1}} = \widehat{\gamma}_\eta$ and $e_{\overline{s_{i_{k+1}}}}=e_\eta$, so that the edge $e_\eta$ and corresponding maximum $u_{e_\eta}$ are marked by $\overline{s_{i_{k+1}}}$.

The reason that the procedure outlined above must indeed terminate is as follows. Each time a new $\widetilde{\gamma}_{j_\nu}$ is added to a cycle $\widehat{\gamma}_{j_{\nu-1}}$ (for $\nu\in \{1,\ldots,\eta\}$), it is because the edge containing the maximum point of $\widehat{\gamma}_{j_{\nu-1}}$ with largest function value is marked by $\overline{s_{j_\nu}}$.
Note that $j_\nu\neq j_\beta$ for $\nu\neq \beta$ (as during the procedure, the edge $e_i$ containing the maximum function value in the cycle $\widehat{\gamma}_i$ are all distinct), each $j_\nu < i_{k+1}$, and $\overline{s_{j_\nu}} \in S_k$. Furthermore, Invariance 2 guarantees that $\widehat{\gamma}_\eta$ cannot be empty, as each cycle $\widetilde{\gamma}_{j_\nu}$ can be written as a linear combination of elements in the shortest system of loops with indices at most $j_\nu$. As ${j_\nu} < i_{k+1}$, the cycle $\gamma' = \widetilde{\gamma}_{j_1}+\widetilde{\gamma}_{j_2}+ \ldots + \widetilde{\gamma}_{j_\eta}$ can be represented as a linear combination of basis cycles with indices strictly smaller than $i_{k+1}$. In other words, $\gamma_{i_{k+1}}$ and $\gamma'$ must be linearly independent, and thus $\widehat{\gamma}_\eta = \gamma_{i_{k+1}} + \gamma'$ cannot be empty.
Again, $j_\nu\neq j_\beta$ for $\nu\neq \beta$ and each $j_\nu < i_{k+1}$, and thus it follows that after at most $k$ iterations, we will obtain a cycle whose highest valued maximum and corresponding edge are not yet marked.

Now, we must show that the three invariances are satisfied as a result of the process described in this second case. To begin, we point out that Invariance 2 holds by construction. Next, the following lemma establishes Invariance 3.

\begin{lem}
\label{lem:gvineq}
For $\widetilde{\gamma}_{i_{k+1}} = \widehat{\gamma}_\eta = \gamma_{i_{k+1}}+\widetilde{\gamma}_{j_1}+\widetilde{\gamma}_{j_2}+ \ldots + \widetilde{\gamma}_{j_\eta}$ as above, $\myheight(\widetilde{\gamma}_{i_{k+1}}) \le s_{i_{k+1}}$.
\end{lem}

\begin{proof}
Set $\widehat{\gamma}_0 = \gamma_{i_{k+1}}$, and for $\nu \in \{1,\ldots,\eta\}$, set $\widehat{\gamma}_\nu = \gamma_{i_{k+1}} + \widetilde{\gamma}_{j_1} + \cdots + \widetilde{\gamma}_{j_\nu}$.
Using induction, we will show that $\myheight(\widehat{\gamma}_\nu) \le s_{i_{k+1}}$ for any $\nu\in \{0,\ldots, \eta\}$.
The inequality obviously holds for $\nu=0$.
Suppose it holds for all $\nu \le \rho < \eta$,
and consider $\nu = \rho+1$ where $\widehat{\gamma}_{\rho+1} = \widehat{\gamma}_\rho + \widetilde{\gamma}_{j_{\rho+1}}$.
The cycle $\widetilde{\gamma}_{j_{\rho+1}}$ is added as the edge $e_\rho$ of $\widehat{\gamma}_{\rho}$ containing the current maximum point of highest value of $g$ has already been marked by $\overline{s_{j_{\rho+1}}}$ with $j_{\rho+1} < i_{k+1}$. By Invariance 2, $e_\rho$ must also be the edge in $\widetilde{\gamma}_{j_{\rho+1}}$ containing the point of maximum $g$ function value, which we denote by $g(e_\rho)$.
Therefore, after the addition of $\widehat{\gamma}_{\rho}$ and $\widetilde{\gamma}_{j_{\rho+1}}$,
\begin{align}
\text{(i)}~ &highest(\widehat{\gamma}_{\rho+1}) := \max_{x\in \widehat{\gamma}_{\rho+1}} g(x) \le g_v(e_\rho), ~\text{and} \label{eqn:maxbound}\\
\text{(ii)}~&lowest(\widehat{\gamma}_{\rho+1}) := \min_{x\in \widehat{\gamma}_{\rho+1}} g(x) \ge \min \{~ lowest(\widehat{\gamma}_\rho), ~lowest(\widetilde{\gamma}_{j_{\rho+1}}) ~\}.\nonumber
\end{align}
By the induction hypothesis, $\myheight(\widehat{\gamma}_{\rho}) \le s_{i_{k+1}}$, while by Invariance 3, $\myheight(\widetilde{\gamma}_{j_{\rho+1}}) \le s_{j_{\rho+1}} \le s_{i_{k+1}}$.
By (ii) of equation (\ref{eqn:maxbound}), it then follows that
\[lowest(\widehat{\gamma}_{\rho+1}) \ge \min \{ g(e_\rho) - \myheight(\widehat{\gamma}_\rho), g(e_\rho) - \myheight(\widetilde{\gamma}_{j_{\rho+1}})\} \ge g(e_\rho) - s_{i_{k+1}}. \]
Combining this with (i) of equation (\ref{eqn:maxbound}), we have that $\myheight(\widehat{\gamma}_{\rho+1}) \le s_{i_{k+1}}$. The lemma then follows by induction.
\end{proof}

Finally, we show that Invariance 1 also holds.  Since $\widetilde{\gamma}_{i_{k+1}} = \widehat{\gamma}_\eta = \gamma_{i_{k+1}}+\gamma'$, with $\gamma'$ defined as above, by Lemma \ref{lem:gvtake2}, we have that $g(u_{e_\eta}) \ge s_{i_{k+1}}$.
 Suppose $u_{e_\eta}$ is paired with some graph node $w$
 so that $p_{e_\eta}=g(w)$. As the height of $\widetilde{\gamma}_{i_{k+1}}$ is at most $s_{i_{k+1}}$ (Lemma \ref{lem:gvineq}), combined with Observation~\ref{lem:DHatMax1}, we have that
\[ g(u_{e_\eta})-s_{i_{k+1}} \leq lowest(\widetilde{\gamma}_{i_{k+1}}) \leq p_{e_\eta}.\]
This implies that the point $(p_{e_\eta}, g(u_{e_\eta})) \in F_{\overline{s_{i_{k+1}}}}$, establishing Invariance 1.

We continue the process described above until $k = a$. At each iteration, when we process $\overline{s_{i_k}}$, we add a new neighbor for elements in $S$. In the end, after processing all of the $a$ elements in $S$, we find $a$ neighbors for $S$, and the total number of neighbors in $\widehat{G}$ of elements in $S$ can only be larger. Since this holds for any subset $S$ of $D^\star$, the condition for Hall's theorem is satisfied for the bipartite graph $\widehat{G}$. This implies that there exists a perfect matching in $\widehat{G}$, completing the proof of Theorem~\ref{thm:perfect}.
\end{proof}

Theorem~\ref{thm:main} now follows from Lemma~\ref{lem:feasPts} and equation~\eqref{eq:delta}.
\end{proof}


\subsection{Proving the conjecture when both graphs are trees of loops}

The techniques of Theorem~\ref{thm:main} are specific to the case when one of the graphs is a bouquet graph. However, we can prove Conjecture~\ref{mainCon} in another setting, as well: the case when both graphs are \emph{trees of loops}.

\begin{defn}
A \textbf{tree of loops} is a metric graph constructed via wedge sums of cycles and edges.
 \end{defn}

\noindent See Figure \ref{fig:TreeOfLoops} for an example. The fact that the inequality holds when both graphs are trees of loops follows from an application of the following lemma. 

\begin{figure}[ht]
\centering
\includegraphics[width=0.3\textwidth]{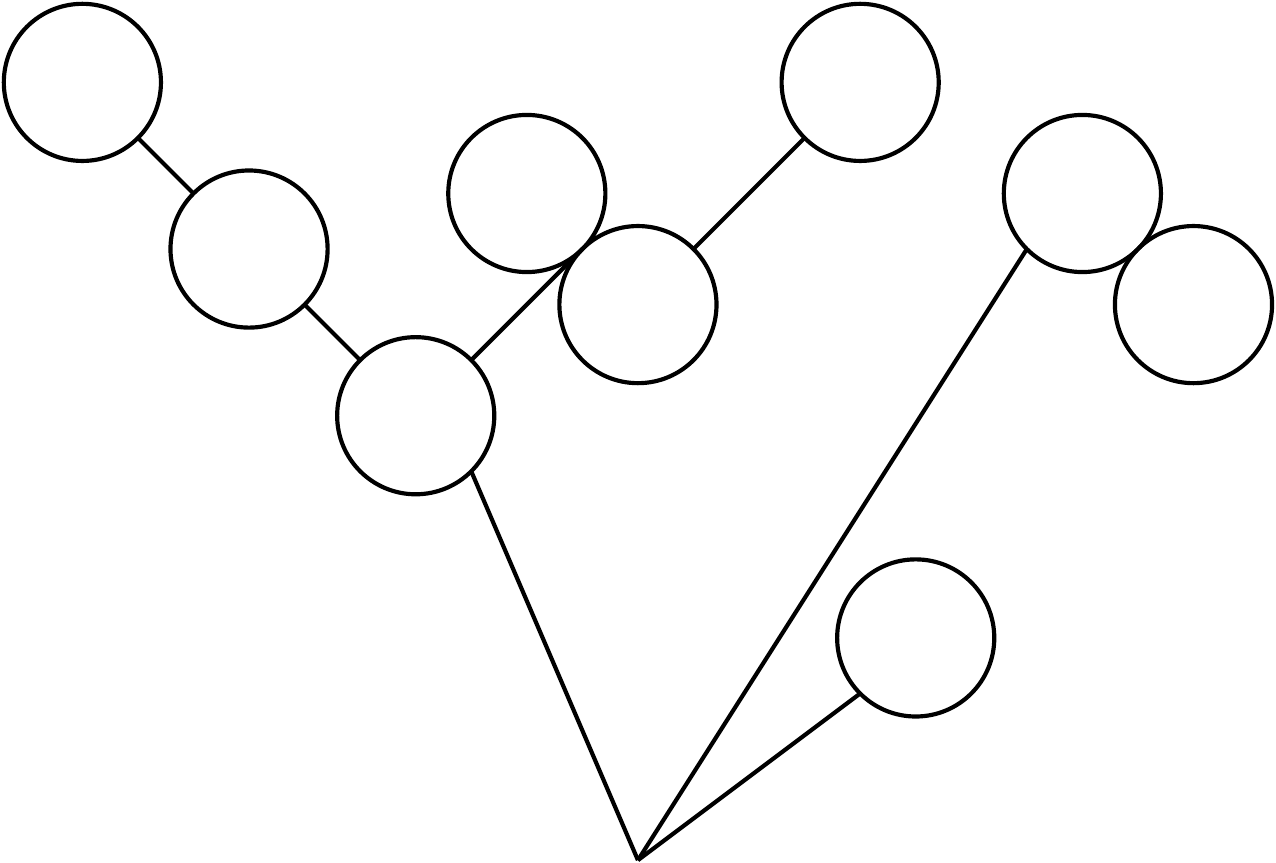}
\caption{An example of a tree of loops.}
\label{fig:TreeOfLoops}
\end{figure}

\begin{lem}\label{lem:bottleneck_ineq}
Let $P$ and $Q$ be two persistence diagrams with finite numbers of off-diagonal points. Let $d_1$ and $d_2$ be distances defined between points in $P$ and points in $Q$ such that $d_1(p, q) \leq d_2(p, q)$ for every $p \in P, q \in Q$. Then $d_B(P, Q)$ under distance $d_1$ is less than or equal to $d_B(P, Q)$ under distance $d_2$.
\end{lem}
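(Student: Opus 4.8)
The plan is to exploit the fact that the bottleneck distance is, by its very definition, a monotone functional of the underlying pointwise distance: it is an infimum over matchings of a supremum of matched-pair distances, and both $\inf$ and $\sup$ preserve pointwise inequalities. To make this precise, write the bottleneck distance computed with a pointwise distance $d$ as
\[ d_B^d(P,Q) = \inf_{\mu} \; \sup_{x} \; d(x, \mu(x)), \]
where $\mu$ ranges over all bijections between the off-diagonal points of $P$ (augmented by diagonal points) and those of $Q$ (augmented by diagonal points), and $x$ ranges over the matched elements. The goal is simply to show $d_B^{d_1}(P,Q) \le d_B^{d_2}(P,Q)$.

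First I would fix an arbitrary matching $\mu$ and compare its cost under $d_1$ with its cost under $d_2$. For each matched pair $(x,\mu(x))$ of off-diagonal points, the hypothesis $d_1(p,q)\le d_2(p,q)$ applies directly, giving $d_1(x,\mu(x)) \le d_2(x,\mu(x))$; taking the supremum over all matched pairs yields $\sup_x d_1(x,\mu(x)) \le \sup_x d_2(x,\mu(x))$, so the $d_1$-cost of $\mu$ never exceeds its $d_2$-cost. I would then take the infimum over all matchings on both sides. Since the inequality between cost functions holds for every single $\mu$, it is preserved under the infimum, which is exactly the claim. Because $P$ and $Q$ have only finitely many off-diagonal points, the relevant matchings form a finite family and the infima are attained, so the argument goes through with no delicacy about limits.

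The one step requiring genuine care — and the main (indeed only) obstacle — is the treatment of points matched to the diagonal, which the stated hypothesis ``$d_1(p,q)\le d_2(p,q)$ for $p\in P,\,q\in Q$'' does not literally cover. I would handle this by treating the diagonal as an auxiliary point $\Delta$ adjoined to each diagram and verifying that the domination also holds for diagonal costs, i.e. $d_1(x,\Delta)\le d_2(x,\Delta)$ for an off-diagonal point $x$ matched to $\Delta$; in the regime where the lemma is applied this is immediate because the distance-to-diagonal term is measured the same way under both $d_1$ and $d_2$ (and hence trivially dominated). Once this extended pointwise domination is in place, every matching's $d_1$-cost is bounded by its $d_2$-cost, and the $\inf$–$\sup$ monotonicity argument of the previous paragraph finishes the proof.
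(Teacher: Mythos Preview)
Your argument is correct and follows essentially the same route as the paper's own proof: fix a matching, use the pointwise inequality $d_1\le d_2$ to bound its $d_1$-cost by its $d_2$-cost, and then pass to the infimum over matchings (the paper simply picks the $d_2$-optimal matching $\mu'$ and bounds $d_B^{d_1}$ by the $d_1$-cost of $\mu'$). Your explicit attention to the diagonal-matching case is a reasonable point of care that the paper's proof does not spell out.
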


\begin{proof}
The bottleneck distance under a particular distance $d$ is given by
\[ d_B(P, Q) = \min_\mu \max_{p} d(p, \mu(p)),\]
where the minimum is taken over all matchings $\mu : P \rightarrow Q$. If we consider a fixed matching $\mu$, the relationship between $d_1$ and $d_2$ implies that
\begin{equation}
\max_{p} d_1(p, \mu(p)) \leq \max_{p} d_2(p, \mu(p)).\label{this} \end{equation}
Let $\mu'$ be the matching that achieves the minimum for distance $d_2$.
The inequality \eqref{this} together with this minimum implies that
\[ d_B(P, Q)\text{ under }d_1 = \min_\mu \max_{p} d_1(p, \mu(p)) \leq \max_{p} d_1(p, \mu'(p)) \leq \max_{p} d_2(p, \mu'(p))  = d_B(P,Q)\text{ under }d_2. \]
\end{proof}

 \begin{prop}
 \label{prop:treeofloops}
Let $G_1$ and $G_2$ be two finite metric graphs such that both are trees of loops. Then
\[ d_{IC}(G_1, G_2) \leq \ds \frac{1}{2} d_{PD}(G_1, G_2). \]
\end{prop}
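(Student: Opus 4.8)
The plan is to combine a special structural feature of trees of loops with Lemma~\ref{lem:bottleneck_ineq} and Theorem~\ref{thm:ICDist}. Write the half-lengths of the loops of $G_1$ as $t_1 \le \cdots \le t_n$ (padded with zeros as in Corollary~\ref{cor:ICDist}) and those of $G_2$ as $s_1 \le \cdots \le s_n$, and set $\delta := d_{IC}(G_1,G_2) = \max_i \tfrac{|s_i-t_i|}{2}$. The key structural fact I would establish first is this: in a tree of loops, each element $\gamma_i$ of the shortest system of loops meets the rest of the graph in a single vertex $a_i$, since this is exactly what the wedge-sum construction produces. Consequently, for \emph{any} base point $v$, the restriction of the geodesic function $g_v$ to $\gamma_i$ attains its minimum at the attachment point $a_i$ and its maximum at the antipodal point, so its height equals $s_i$ independent of $v$. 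Because each basis loop is \emph{clean} (the two descending arcs from the maximum merge exactly at $a_i$), the birth coordinate of the corresponding point of $\Dg{g_v}$ is precisely this minimum, and hence its persistence (death minus birth) is exactly $s_i$. By Observation~\ref{lem:DHatNo} there are exactly $n$ off-diagonal points, one per loop, so the multiset of persistences of the $1$-dimensional diagram $D_v := \Dg{g_v}$ equals $\{t_1,\ldots,t_n\}$ for every $v\in|G_1|$, and likewise $\{s_1,\ldots,s_n\}$ for every $v\in|G_2|$. Justifying that the birth lands exactly at $a_i$ rather than strictly above it (Observation~\ref{lem:DHatMax1} only guarantees the weaker inequality $p' \ge p$) is where I expect the real work to lie.

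Next, I would fix arbitrary base points $v\in|G_1|$ and $w\in|G_2|$ and compare two distances between points of $D_v$ and $D_w$. Let $d_2(z,z') = \|z-z'\|_1$ be the distance underlying the bottleneck definition, and let $d_1(z,z') = |\pi(z)-\pi(z')|$ where $\pi(z_1,z_2) = z_2 - z_1$ is persistence. Since $|\pi(z)-\pi(z')| = |(z_2-z_2')-(z_1-z_1')| \le |z_1-z_1'| + |z_2-z_2'|$, we have $d_1 \le d_2$; moreover both assign the same cost $\pi(z)$ to matching $z$ to the diagonal, so Lemma~\ref{lem:bottleneck_ineq} applies with the diagonal handled consistently and yields $d_B^{d_1}(D_v,D_w) \le d_B^{d_2}(D_v,D_w) = d_B(D_v,D_w)$. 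But $d_1$ depends only on persistences, so $d_B^{d_1}(D_v,D_w)$ is exactly the bottleneck distance between the $y$-axis diagrams $\{(0,t_i)\}$ and $\{(0,s_i)\}$, which by Theorem~\ref{thm:ICDist} equals $\max_i |s_i-t_i| = 2\delta$. Therefore $d_B(D_v,D_w) \ge 2\delta$ for every pair of base points.

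Finally, I would feed this uniform lower bound into the Hausdorff distance. Since $d_B(D_1,D_2) \ge 2\delta$ for all $D_1 \in \Phi(|G_1|)$ and $D_2 \in \Phi(|G_2|)$, we get $\inf_{D_2} d_B(D_1,D_2) \ge 2\delta$ for each fixed $D_1$, hence $\sup_{D_1}\inf_{D_2} d_B(D_1,D_2) \ge 2\delta$, and a fortiori $d_{PD}(G_1,G_2) = d_H(\Phi(|G_1|),\Phi(|G_2|)) \ge 2\delta$. Combining with $\delta = d_{IC}(G_1,G_2)$ gives $d_{IC}(G_1,G_2) \le \tfrac12 d_{PD}(G_1,G_2)$, as desired. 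The whole argument hinges on the base-point-independence of the persistence multiset for trees of loops; once that is in hand, the comparison of $d_1$ and $d_2$ and the reduction to Theorem~\ref{thm:ICDist} are routine, and the bound is obtained uniformly over all base points rather than through the delicate Hall's-theorem matching needed in Theorem~\ref{thm:main}.
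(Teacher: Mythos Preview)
Your proposal is correct and follows essentially the same route as the paper: both exploit that in a tree of loops each diagram point has the form $(p_i,p_i+t_i)$ (persistence exactly $t_i$), define $d_1$ as the difference of persistences and $d_2=\|\cdot\|_1$, invoke Lemma~\ref{lem:bottleneck_ineq}, and then identify the $d_1$-bottleneck with the $y$-axis case handled by Theorem~\ref{thm:ICDist}. You are simply more explicit than the paper about why the birth coordinate lands at the attachment vertex, about the diagonal costs coinciding, and about passing from the uniform pairwise bound to the Hausdorff distance.
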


\begin{proof}
Let $G_1$ and $G_2$ be trees of loops of lengths $2t_1',\ldots,2t'_m$ and $2s_1,\ldots,2s_n$, respectively, each set listed in non-decreasing order. Without loss of generality, suppose $n \geq m$.
First, as in Corollary~\ref{cor:ICDist}, the intrinsic \Cech{} distance between $G_1$ and $G_2$ is $
 d_{IC}(G_1, G_2) = \ds\max_{i=1}^n \frac{|s_i - t_i|}{2}$
 where $t_1 = \cdots = t_{n-m} = 0$, $t_{n-m+1}=t'_1,\ldots,$ and $t_n=t'_m$.
Suppose $d_{IC}(G_1,G_2)=\ds\frac{|t_k-s_k|}{2}$ for some $k$, $1\leq k \leq n$.
Let $f$ and $g$ denote the geodesic distance functions on $G_1$ and $G_2$, respectively.

For trees of loops, the persistence diagrams take the form $\DgPD{f_v}=\{(p_i,p_i+t_i)\}_{1\leq i \leq n}$ and $\DgPD{g_w}=\{(q_i,q_i+s_i)\}_{1\leq i \leq n}$ for $v \in G_1$ and $w \in G_2$. The proposition holds if, for any pair of persistence diagrams $\DgPD{f_{v}}$ and $\DgPD{g_{w}}$, $d_B(\DgPD{f_v},\DgPD{g_w})\geq |t_k-s_k|$.

We will prove this by applying Lemma \ref{lem:bottleneck_ineq}.
For any $i,j\in\{1,\ldots,n\}$, let $d_1((p_i, p_i+t_i), (q_j, q_j+s_j)) = |t_i-s_j|$ and $d_2((p_i, p_i+t_i), (q_j, q_j+s_j)) = || (p_i, p_i+t_i) - (q_j, q_j+s_j)||_1.$ It's easy to show that $d_1 \leq d_2$ for every pair of points, so that the conditions of the lemma are satisfied. Notice that distance $d_1$ is equivalent to the case where all $p_i =q_i = 0$, i.e., the points are along the $y$-axis. By Theorem~\ref{thm:ICDist}, the bottleneck distance under $d_1$ equals $|t_k - s_k|=2d_{IC}(G_1, G_2)$.
Therefore, the bottleneck distance between $\DgPD{f_v}$ and $\DgPD{g_w}$ under $d_2$ is at least $|t_k-s_k|$, as desired.
\end{proof}

\section{Discussion and future work}
\label{sec:future}

In this paper, we compare the discriminative capabilities of the intrinsic \Cech{} and persistence distortion distances, which are based on topological signatures of metric graphs. The intrinsic \Cech{} signature arises from the intrinsic \Cech{} filtration of a metric graph, and the persistence distortion signature is based on the set of persistence diagrams arising from sublevel set filtrations of geodesic distance functions from all base points in a given metric graph.
A map from a metric graph to these topological signatures is not injective: 
two different metric graphs may map to the same signature.
However, each signature captures structural information of a graph and serves as a type of topological summary.
Understanding the relationship between the intrinsic \Cech{} and persistence distortion distances enables one to better understand the discriminative powers of such summaries.

We conjecture that the intrinsic \Cech{} distance is less discriminative than the persistence distortion distance for general metric graphs $G_1$ and $G_2$, so that there exists a constant $c\geq 1$ with $d_{IC}(G_1,G_2) \le c \cdot d_{PD}(G_1,G_2)$.
This statement is trivially true in the case when both graphs are trees as the intrinsic \Cech{} distance is 0 while the persistence distortion distance is not.
We establish a sharper version of the conjectured inequality in the case when one of the graphs is a bouquet graph and the other is arbitrary, as well as in the case when both graphs are obtained via wedges of cycles and edges.
The methods of proof in Theorem~\ref{thm:main} and Proposition~\ref{prop:treeofloops} rely on explicitly knowing the forms of the persistence diagrams for the geodesic distance function in the case of a bouquet graph or a tree of loops.
Therefore, these methods do not readily carry over to the most general setting for arbitrary metric graphs.
Nevertheless, we believe that the relationship between the intrinsic \Cech{} and persistence distortion distances should hold for arbitrary finite metric graphs.
Intuitively, the intrinsic \Cech{} signature only captures the sizes of the shortest loops in a metric graph, whereas the persistence distortion signature takes into consideration the relative positions of such loops and their interactions with one another.

As one example application relating the intrinsic \Cech{} and persistence distortion summaries (and hence, distances), the work of Pirashvili, et al. \cite{PirashviliSteinbergBelchi-Guillamon2018} considers how the topological structure of chemical compounds relates to solubility in water, which is of fundamental importance in modern drug discovery. Analysis with the topological tool mapper \cite{SinghMemoliCarlsson2007} reveals that compounds with a smaller number of cycles are more soluble. The number of cycles, as well as cycle lengths, is naturally encoded in the intrinsic \Cech{} summary. 
In addition, these authors also use a discrete persistence distortion summary -- where only the graph nodes, i.e., the atoms, serve as base points -- to show that nearby compounds have similar levels of solubility. Although we conjecture that the intrinsic \Cech{} distance is less discriminative then the persistence distortion distance, it might be sufficient in this particular analysis since solubility is highly correlated with the number of cycles of a chemical compound, that is, with the intrinsic \Cech{} summary \cite{GasparovicGommelPurvine2018}.
It would be interesting to investigate other applications of the intrinsic \Cech{} and persistence distortion summaries in the context of data sets modeled by metric graphs.

In addition, recall from the definition of the persistence distortion distance the map $\Phi : |G| \rightarrow SpDg$, $\Phi(v) = \DgPD{f_v}$.
The map $\Phi$ is interesting in its own right.
For instance, what can be said about the set $\Phi(|G|)$ in the space of persistence diagrams for a given $G$?
Given only the set $\Phi(|G|) \subset SpDg$, what information can one recover about the graph $G$?
Oudot and Solomon~\cite{OudotSolomon2018} show that there is a dense subset of metric graphs (in the Gromov--Hausdorff topology, and indeed an open dense set in the so-called fibered topology) on which their barcode transform via the map $\Phi$ is globally injective up to isometry.
They also prove its local injectivity on the space of metric graphs.
Another question of interest is, how does the map $\Phi$ induce a stratification in the space of persistence diagrams?
Finally, it would also be worthwhile to compare the discriminative capacities of the persistence distortion and intrinsic \Cech{} distances to other graph distances, such as the interleaving and functional distortion distances in the special case of Reeb graphs.

\section*{Conflict of Interest Statement}

On behalf of all authors, the corresponding author states that there is no conflict of interest.

\bibliographystyle{plainurl}
\bibliography{main}

\end{document}